\newtheorem{definition}{Definition}[section]
\newtheorem{theorem}[definition]{Theorem}
\newtheorem{lemma}[definition]{Lemma}
\newtheorem{corollary}[definition]{Corollary}
\newtheorem{proposition}[definition]{Proposition}
\theoremstyle{definition}
\newtheorem{remark}[definition]{Remark}
\newtheorem{example}[definition]{Example}
\newtheorem{notation}[definition]{Notation}
\newcommand\CC{\mathbb{C}}
\newcommand\NN{\mathbb{N}}
\newcommand\TT{\mathbb{T}}
\newcommand\ds{\displaystyle}
\newcommand\style{\mathcal }          
\newcommand{\B}{\style{B}}
\newcommand{\M}{M}
\renewcommand{\H}{\style{H}}
\newcommand{\K}{\style K}
\newcommand{\X}{\style{X}}
\newcommand{\C}{\mathbb{C}}
\newcommand\shilov{\partial_{\rm S}}   
\newcommand\choquet{\partial_{\rm C}}
\newcommand\osf{{\style F}}
\newcommand\osp{{\style P}}
\newcommand\oss{{\style S}}
\newcommand\ost{{\style T}}
\newcommand\csta{{\style A}}
\newcommand\cstar{{\rm C}^*}                              
\newcommand\cstare{{\rm C}_{\rm e}^*}              
\newcommand\conv{{\rm Conv} }                          
\begin{document}

\title[The Noncommutative Choquet Boundary of Periodic Weighted Shifts]{The Noncommutative Choquet Boundary of Periodic Weighted Shifts}

\author[Mart\'\i n Argerami]{Mart\'\i n~Argerami}
\address{Department of Mathematics and Statistics,
University of Regina,
Regina, SK S4S 0A2,
Canada}
\email{argerami@math.uregina.ca}

\author[Douglas Farenick]{Douglas Farenick}
\address{Department of Mathematics and Statistics, University of Regina,
Regina, SK S4S 0A2, Canada}
\email{douglas.farenick@uregina.ca}

\thanks{This work is supported in part by the NSERC Discovery Grant program}
\keywords{boundary representation, noncommutative Choquet boundary, operator space, operator system, C$^*$-envelope, periodic weighted shift operator}
\subjclass[2010]{Primary 46L07; Secondary 47A12, 47C10}

\begin{abstract}
The noncommutative Choquet boundary and the C$^*$-envelope of operator systems of the form $\mbox{\rm Span}\,\{1,T,T^*\}$,
where $T$ is a Hilbert space operator with normal-like features, are studied.
Such operators include normal operators, $k$-normal operators, subnormal operators, and Toeplitz operators. Our main result is the determination of the
noncommutative Choquet boundary for an operator system generated by an irreducible periodic weighted unilateral shift operator.
\end{abstract}

\maketitle

\section{Introduction}

If $Y$ is a compact Hausdorff space and $C(Y)$ is the Banach space of all continuous complex-valued functions on $Y$, then the \emph{Choquet boundary} of a
linear subspace $\mathcal F\subset C(Y)$ that contains the constants and separates the points of $Y$ is the subset $\choquet\mathcal F\subset Y$ of all $y\in Y$
for which the point-mass measure $\delta_y$ on the Borel sets of $Y$ is the only Borel probability measure $\mu$ on $Y$ for which $f(y)=\int_X f\,d\mu$ for every
$f\in\mathcal F$.
Motivated by the use of the Choquet boundary in the analysis of spaces of continuous complex-valued functions (as in \cite{Phelps-book}, for example),
W.~Arveson initiated the study of analogous objects in the setting of matricially ordered
vector spaces $\X$ of bounded linear operators acting
on complex Hilbert spaces $\H$ \cite{arveson1969,arveson1972}.

A notion that is central in Arvesons's work and its subsequent application is that of a boundary representation.
A  \emph{boundary representation} for a unital operator space $\X\subset\B(\H)$---that is, a subspace $\X\subset\B(\H)$ with $1_{\B(\H)}\in\X$---is a unital C$^*$-algebra
representation $\rho:\cstar(\X)\rightarrow\B(\H_\rho)$ such that
\begin{enumerate}
\item $\rho$ is irreducible and
\item for any unital completely positive (ucp) linear map $\psi:\cstar(\X)\rightarrow\B(\H_\rho)$ with $\psi|_\X^{\phantom{\X}}=
\rho|_\X^{\phantom{\X}}$ we have $\psi=\rho$ (i.e. $\rho|_\X^{\phantom{\X}}$ has a unique completely contractive
extension to $\cstar(\X)$, namely, $\rho$).
\end{enumerate}

If $\choquet\X$ denotes the subset of the spectrum of $\cstar(\X)$ consisting of the
unitary-equivalence classes $\dot{\rho}$ of boundary representations
$\rho$ for $\X$, then
the ideal $\mathfrak S_\X\subset\cstar(\X)$ defined by
\[
\mathfrak S_\X\,=\,\bigcap_{\dot{\rho}\in\choquet\X}\,\ker\rho\,,
\]
is called the \emph{\v Silov ideal} for $\X$. A unital operator space $\X\subset\B(\H)$
is said to \emph{have a noncommutative Choquet boundary} if
the canonical quotient homomorphism $\cstar(\X)\rightarrow\cstar(\X)/\mathfrak S_\X$ is a complete isometry on $\X$,
in which case the subset $\choquet\X$ of the spectrum of $\cstar(\X)$ is called the \emph{noncommutative Choquet boundary} of $\X$. If $\X$ has a noncommutative Choquet
boundary, then the C$^*$-algebra
$\cstar(\X)/\mathfrak S_\X$ is called the \emph{{\rm C}$^*$-envelope} of $\X$, which we denote by $\cstare(\X)$.
An important theorem of Arveson \cite{arveson2008} asserts that every
separable unital operator space $\X\subset\B(\H)$ has a noncommutative Choquet boundary.

The C$^*$-envelope $\cstare(\X)$ of a unital operator space $\X$
can be viewed as the smallest C$^*$-algebra that is generated by (a copy of) $\X$; concretely, $(\cstare(\X),\iota)$
is a $\cstar$-envelope for $\X$ if $\cstare(\X)$ is a $\cstar$-algebra and $\iota:\X\to\cstare(\X)$ is a complete
isometry such that
whenever $\phi:\X\to A$ is
a complete isometry into a $\cstar$-algebra $A$,  there exists an epimorphism of $\cstar$-algebras
$\pi:\cstar(\phi(\X))\to\cstare(\X)$ such that $\pi\circ\phi=\iota$. It follows easily from this definition
that the $\cstar$-envelope of $\X$ is unique up to isomorphism of $\cstar$-algebras.

It is not easy, in general, to determine the $\cstar$-envelope of a given unital operator space.
There is, however, a substantial literature
for the case in which $\X$ is an operator algebra
(for example, \cite{arveson1998,Blecher--LeMerdy-book,davidson--katsoulis2011,katsoulis--kribs2006b}).
But our focus in this paper is in the realm of single operator theory, as we consider the smallest possible
unital operator spaces (namely, those of the form $\X_T:=\mbox{Span}\,\{1,T\}$ for some operator $T\in\B(\H)$).

Because completely contractive linear maps $\phi_0^{\phantom{0}}:\X\rightarrow\B(\K)$ of a unital operator space $\X\subset\B(\H)$
extend to completely positive linear maps $\phi$ of the operator system $\X+\X^*$ via
$\phi_0^{\phantom{0}}(X+Y^*)=\phi_0^{\phantom{0}}(X)+\phi_0^{\phantom{0}}(Y)^*$ \cite[Proposition 2.12]{Paulsen-book}, $\X$ and $\X+\X^*$ have the same boundary representations.
Therefore, we shall mainly consider operator systems. In particular, if $T\in\B(\H)$, then
\[
\oss_T\,=\,\mbox{\rm Span}\,\{1,T,T^*\}
\]
is the \emph{operator system generated by $T$}. Such an operator system generates a unital C$^*$-algebra, namely $\cstar(\oss_T)$, which we
sometimes denote by $\cstar(T)$.

A linear map $\phi:\oss\rightarrow\ost$ of operator systems $\oss$ and $\ost$ is a \emph{complete order isomorphism} if $\phi$ is a linear isomorphism and both
$\phi$ and $\phi^{-1}$ are completely positive. If $\phi$ is a unital linear isomorphism, then $\phi$ is a complete order isomorphism if and only if $\phi$ is a
complete isometry  \cite[Proposition 13.3]{Paulsen-book}.
A straightforward application of the definition of the $\cstar$-envelope shows that
every unital complete order isomorphism $\phi:\oss\rightarrow\ost$ extends
to a unital C$^*$-algebra homomorphism $\pi:\cstare(\oss)\rightarrow\cstare(\ost)$.

Our primary objective is to determine $\choquet\oss_T$ and $\cstare(\oss_T)$ for irreducible periodic weighted unilateral shift operators $T$.
But to do so, it is useful to first consider the case of normal operators,
followed by the case of operators that display normal-like properties. While the results on normal operators are surely known to experts, we have not seen an explicit discussion of this case in the
literature, and so we present a self-contained treatment here, giving particular emphasis to the role of the numerical range and spectrum in obtaining such results.

\section{Preliminaries}

\begin{notation}
For an operator system $\oss\subset\B(\H)$,
the canonical quotient homomorphism $\cstar(\oss)\rightarrow\cstar(\oss)/\mathfrak S_\oss$
is denoted by $q_{\rm e}$, and  $\iota_{\rm e}=q_{\rm e}{}|_{\oss}$ denotes the ucp map $\oss\rightarrow
\cstar(\oss)/\mathfrak S_\oss$.
\end{notation}

Note that, by definition,  $\oss$ has a noncommutative Choquet boundary if and only if $\iota_{\rm e}$ is a
completely isometric embedding.

\begin{lemma}\label{matrices} $\cstar(\oss_T\otimes\M_m(\CC))=\cstar(\oss_T)\otimes\M_m(\CC)$
for every $T\in\B(\H)$ and $m\in\mathbb N$.
\end{lemma}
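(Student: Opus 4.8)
The plan is to establish the two inclusions of C$^*$-algebras inside $\B(\H\otimes\CC^m)=\B(\H)\otimes\M_m(\CC)$, regarding $\oss_T\otimes\M_m(\CC)$ as the operator system spanned by the elementary tensors $S\otimes E_{ij}$ with $S\in\{1,T,T^*\}$ and $E_{ij}$ the standard matrix units of $\M_m(\CC)$. Write $A=\cstar(\oss_T)=\cstar(T)$; since $\M_m(\CC)$ is finite-dimensional, $A\otimes\M_m(\CC)$ is unambiguously the C$^*$-algebra $\M_m(A)$. The easy inclusion $\cstar(\oss_T\otimes\M_m(\CC))\subseteq A\otimes\M_m(\CC)$ is immediate: every generator $S\otimes E_{ij}$ already lies in the C$^*$-algebra $A\otimes\M_m(\CC)$, so the C$^*$-algebra they generate does too.

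The substance is the reverse inclusion $A\otimes\M_m(\CC)\subseteq B$, where $B:=\cstar(\oss_T\otimes\M_m(\CC))$; the case $m=1$ is trivial, so assume $m\geq2$. First I would note that each $1\otimes E_{ij}$ is itself a generator of $B$ (as $1\in\oss_T$), so all of these matrix units lie in $B$. The crux is to show that $A\otimes E_{11}\subseteq B$. Since $A$ is the closed linear span of the words $w=X_1\cdots X_n$ with each $X_i\in\{T,T^*\}$ (together with $1$), and $B$ is norm-closed, it suffices to prove $w\otimes E_{11}\in B$ for every such word, and I would do this by induction on the length $n$. The case $n=1$ holds because $X_1\otimes E_{11}$ is a generator; for the inductive step, writing $w=w'X_n$, one has
\[
(w'\otimes E_{12})(X_n\otimes E_{21})=w'X_n\otimes E_{11}=w\otimes E_{11},
\]
where $w'\otimes E_{12}=(w'\otimes E_{11})(1\otimes E_{12})\in B$ by the inductive hypothesis and $X_n\otimes E_{21}\in B$ is a generator.

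I expect the main obstacle to be exactly this point: although $\oss_T$ records $T$ and $T^*$ only to first order, one must manufacture words of arbitrary length inside $B$. The device that resolves it is the ``ping-pong'' between the two indices $1$ and $2$ displayed above, so that no relation of the form $n\leq m$ is ever needed. Once $A\otimes E_{11}\subseteq B$ is in hand, I would fill out all corners by conjugation with matrix units: for $a\in A$ and any $i,j$,
\[
a\otimes E_{ij}=(1\otimes E_{i1})(a\otimes E_{11})(1\otimes E_{1j})\in B.
\]
Taking closed linear spans then shows that $A\otimes\M_m(\CC)$, which is the closed linear span of the elements $a\otimes E_{ij}$, is contained in $B$; together with the easy inclusion this yields the claimed equality.
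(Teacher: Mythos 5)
Your proof is correct, but it proceeds quite differently from the paper's. The paper disposes of the lemma in two lines: it identifies $\oss_T\otimes\M_m(\CC)$ with $M_m(\oss_T)$ and $\cstar(\oss_T)\otimes\M_m(\CC)$ with $M_m(\cstar(\oss_T))$, and then invokes, as a known canonical fact, the identification of $M_m(\cstar(\oss_T))$ with $\cstar(M_m(\oss_T))$. Your argument is, in effect, a from-scratch proof of exactly that invoked fact: you verify the nontrivial inclusion $\cstar(\oss_T)\otimes\M_m(\CC)\subseteq\cstar(\oss_T\otimes\M_m(\CC))$ by matrix-unit bookkeeping, using that $1\otimes E_{ij}$ and $X\otimes E_{ij}$ (for $X\in\{T,T^*\}$) are generators, building up words by induction, and then filling in all corners by conjugation with $1\otimes E_{i1}$ and $1\otimes E_{1j}$. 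What the paper's approach buys is brevity and the placement of the content where it belongs (a standard structural fact about matrix algebras over operator systems); what your approach buys is self-containedness and an explicit verification that would convince a reader who does not already accept that identification. One small simplification: the ``ping-pong'' between indices $1$ and $2$ is not actually needed, because $E_{11}$ is idempotent. Since each $X_i\otimes E_{11}$ is itself a generator, the direct product
\[
(X_1\otimes E_{11})(X_2\otimes E_{11})\cdots(X_n\otimes E_{11})\,=\,X_1X_2\cdots X_n\otimes E_{11}
\]
already shows $w\otimes E_{11}\in\cstar(\oss_T\otimes\M_m(\CC))$ for every word $w$, with no induction on the second matrix index; in particular your remark about avoiding a relation between the word length and $m$ is moot, since only the index $1$ is ever used. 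This does not affect correctness, only economy.
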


\begin{proof} We can identify $\oss_T\otimes\M_m(\CC)$ canonically with $M_m(\oss_T)$,
and $\cstar(\oss_T)\otimes\M_m(\CC)$
with $M_m(\cstar(\oss_T))$.
And so our assertion reduces to the also canonical identification of $M_m(\cstar(\oss_T))$
with $\cstar(M_m(\oss_T))$.
\end{proof}

The ideal $\mathfrak S_\oss$ is not an invariant of the operator system $\oss$, as it depends
on the concrete representation of $\oss$; still it
plays a crucial role in Arveson's theory. An easy but key fact that motivates
this significance is as follows:

\begin{lemma}\label{lemma: the quotient as a product}
For any $m\in\NN$, there is a canonical isometric embedding {\rm (}as C$^*$-algebras{\rm )}
\[
\xymatrix{
M_m\left(\cstar(\oss)/\mathfrak S_\oss\right) \ar[rr]^{f_m}&  &
\ds\prod_{\dot{\rho}\in\choquet\oss}M_m(\rho(\cstar(\oss)))
}\] \[
\xymatrix{\ \ \ \ \
 X+M_m(\mathfrak S_\oss)\ \ \ \ar@{|->}[rr]& &\ \ \ \ \ \left(\rho^{(m)}(X)\right)_\rho
 \ \ \ \ \ \ \ \ \
}
\]
\end{lemma}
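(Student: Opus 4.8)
The plan is to exhibit $f_m$ as the $*$-homomorphism induced on a quotient by an explicit amplified family of representations, and then to invoke the fact that an injective $*$-homomorphism between C$^*$-algebras is automatically isometric.

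First I would fix a representative $\rho$ for each class $\dot\rho\in\choquet\oss$ and form the $m$-fold amplification $\rho^{(m)}\colon M_m(\cstar(\oss))\to M_m(\rho(\cstar(\oss)))$, which applies $\rho$ entrywise and is a surjective $*$-homomorphism. Since each $\rho$ is contractive, so is each $\rho^{(m)}$, and therefore the assignment
\[
\Phi_m\colon M_m(\cstar(\oss))\to\prod_{\dot\rho\in\choquet\oss}M_m(\rho(\cstar(\oss))),\qquad \Phi_m(X)=\bigl(\rho^{(m)}(X)\bigr)_\rho,
\]
takes values in the $\ell^\infty$-direct product and is a $*$-homomorphism. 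Because unitarily equivalent representations have equal kernels and have their images carried onto one another by the implementing unitary, $\Phi_m$ depends only on the classes $\dot\rho$, so the choice of representatives is immaterial.

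Next I would compute $\ker\Phi_m$. For $X=[x_{ij}]\in M_m(\cstar(\oss))$ one has $\rho^{(m)}(X)=0$ exactly when $\rho(x_{ij})=0$ for all $i,j$; hence $\Phi_m(X)=0$ if and only if every entry $x_{ij}$ lies in $\bigcap_{\dot\rho\in\choquet\oss}\ker\rho=\mathfrak S_\oss$, the last equality being the definition of the \v{S}ilov ideal. Thus $\ker\Phi_m=M_m(\mathfrak S_\oss)$.

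Finally, using the canonical $*$-isomorphism $M_m(\cstar(\oss)/\mathfrak S_\oss)\cong M_m(\cstar(\oss))/M_m(\mathfrak S_\oss)$ (induced entrywise by the quotient map $\cstar(\oss)\to\cstar(\oss)/\mathfrak S_\oss$, whose amplification is surjective with kernel $M_m(\mathfrak S_\oss)$), the homomorphism $\Phi_m$ descends to a $*$-homomorphism $f_m$ given by $X+M_m(\mathfrak S_\oss)\mapsto(\rho^{(m)}(X))_\rho$. By the kernel computation $f_m$ is injective, and an injective $*$-homomorphism between C$^*$-algebras is isometric, which yields the asserted isometric embedding. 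The bookkeeping---that $\Phi_m$ is a $*$-homomorphism and that the standard identifications $M_m(A)/M_m(I)\cong M_m(A/I)$ and $\rho^{(m)}(M_m(\cstar(\oss)))=M_m(\rho(\cstar(\oss)))$ hold---is routine; the one step deserving genuine care, and hence the main (if mild) obstacle, is the kernel computation, where one must confirm that forming matrices commutes with intersecting kernels, so that the matricial statement reduces precisely to the defining property of $\mathfrak S_\oss$.
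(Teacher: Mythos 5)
Your proof is correct and follows essentially the same route as the paper's: both define the map via the amplifications $\rho^{(m)}$, reduce well-definedness and injectivity to the entrywise kernel computation $\bigcap_{\dot\rho}\ker\rho^{(m)}=M_m(\mathfrak S_\oss)$, and conclude isometry from injectivity of a $*$-homomorphism. The only difference is cosmetic---you first define the map on $M_m(\cstar(\oss))$ and then descend to the quotient, and you spell out the boundedness of the tuple and the independence of the choice of representatives, details the paper leaves implicit.
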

\begin{proof}
We define
\[
f_m(X+M_m(\mathfrak S_\oss))=\prod_{\dot{\rho}\in\choquet\oss}\rho^{(m)}(X),\ \ \ X\in M_m(\cstar(\oss)).
\]
Note that $f_m$ is clearly linear, multiplicative, and $*$-preserving. Also,
\[
\begin{array}{rcl}
\rho^{(m)}(X)=0\ \forall\rho\ &\iff &\rho(X_{hk})=0, \forall \rho, \forall h,k\  \\ &\iff&
X_{hk}\in\mathfrak S_\oss,\ \forall h,k\ \\ &\iff& \ X\in M_m(\mathfrak S_\oss),
\end{array}
\]
which shows that $f_m$ is both well defined and one-to-one.
\end{proof}

We shall also compare our analysis of $\X_T:=\mbox{Span}\,\{1,T\}$ with that of the norm-closed algebra generated by $T$.

\begin{definition} For any operator $T\in\B(\H)$, the \emph{operator algebra generated by $T$}
is the subalgebra $\osp_T\subset\B(\H)$ given by the norm closure of all operators of the form $p(T)$,
for polynomials $p\in\CC\,[t]$.
\end{definition}

As mentioned in the Introduction, most of the focus on $\cstar$-envelopes in the literature is on operator
algebras, while here we focus on operator systems.
Although $\X_T$ and $\osp_T$ generate the same C$^*$-subalgebra of $\B(\H)$,
they do not necessarily have the same C$^*$-envelopes. Explicit examples of this occur in Example
\ref{eg1} when $|\lambda|\leq1/2$, and for operators $T=T^*$ with $|\sigma(T)|\geq3$ (see Proposition
\ref{proposition: selfadjoint case}).

\section{Numerical Range and Spectrum}

We will see below how the numerical range $W(T)$ and spectrum $\sigma(T)$
of $T$ capture information about the boundary representations of $\oss_T$. By numerical range we mean
the compact convex set
\[
W(T)\,=\,\{\phi(T)\,:\,\phi\;\mbox{ is a state on }\oss_T\}\,,
\]
It is well known that the set
\[
W_{\rm s}(T)\,=\,\{\langle T\xi,\xi\rangle\,:\,\xi\in\H,\;\|\xi\|=1\}
\]
is convex and dense in $W(T)$.

\begin{proposition}\label{extremal spectra} Let $T\in\B(\H)$, $\lambda\in\CC$.
\begin{enumerate}
\item\label{ah1} If $\lambda=\rho(T)$ for some $\rho\in\choquet\oss_T$,
then $\lambda\in\sigma(T)\cap\partial W(T)$, and $\lambda$ is an extreme point of $W(T)$.
\item\label{ah2} Assume that $\lambda\in\sigma(T)\cap\partial W(T)$. If $\lambda$ is an extreme point of $W(T)$ and if the commutator $[T^*,T]=T^*T-TT^*$ is positive,
then $\lambda=\rho(T)$ for some $\rho\in\choquet\oss_T$.
\end{enumerate}
\end{proposition}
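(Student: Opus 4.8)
The plan is to translate everything into the language of states and characters, using the observation that a state $\phi$ on $\oss_T$ is completely determined by the single value $\phi(T)\in W(T)$, so that the state space of $\oss_T$ is affinely identified with $W(T)$, the state corresponding to $\lambda$ being $\phi_\lambda(a\cdot 1+bT+cT^*)=a+b\lambda+c\bar\lambda$. The key preliminary remark is that if $\rho\in\choquet\oss_T$ satisfies $\rho(T)=\lambda$ (meaning $\rho(T)=\lambda\,1_{\H_\rho}$), then $\rho(T^*)=\bar\lambda\,1_{\H_\rho}$, so $\rho(\cstar(T))=\CC\,1_{\H_\rho}$; irreducibility then forces $\dim\H_\rho=1$, i.e. $\rho$ is a character. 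Thus ``$\rho$ is a boundary representation with $\rho(T)=\lambda$'' is equivalent to: $\phi_\lambda$ extends uniquely to a state on $\cstar(T)$, and that extension is multiplicative.

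For part (\ref{ah1}), since $\rho$ is a state with $\rho(T)=\lambda$ we immediately get $\lambda\in W(T)$, and since $\rho$ is a character with $\rho(T-\lambda)=0$ the element $T-\lambda$ is not invertible, so $\lambda\in\sigma(T)$. To see that $\lambda$ is extreme in $W(T)$, suppose $\lambda=t\mu_1+(1-t)\mu_2$ with $\mu_1,\mu_2\in W(T)$ and $t\in(0,1)$. Extending the corresponding states $\phi_{\mu_1},\phi_{\mu_2}$ to states $\psi_1,\psi_2$ on $\cstar(T)$, the convex combination $t\psi_1+(1-t)\psi_2$ is a state extending $\phi_\lambda$, hence equals $\rho$ by the uniqueness property. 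As a character, $\rho$ is a pure state and so an extreme point of the state space of $\cstar(T)$, forcing $\psi_1=\psi_2=\rho$ and therefore $\mu_1=\mu_2=\lambda$. Thus $\lambda$ is extreme in $W(T)$, and being extreme it lies in the topological boundary $\partial W(T)$.

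For part (\ref{ah2}) the existence of at least one state $\psi$ on $\cstar(T)$ with $\psi(T)=\lambda$ is guaranteed by extending $\phi_\lambda$, which is a legitimate state because $\lambda\in W(T)$. The heart of the argument is to show that every such $\psi$ is forced to be one-dimensional. Write $S=\pi_\psi(T)$ in the GNS representation and let $\xi$ be the cyclic unit vector, so $\langle S\xi,\xi\rangle=\lambda$ and, crucially, $S$ is again hyponormal since $\pi_\psi$ is a $*$-homomorphism and $W(S)\subseteq W(T)$. After the affine change $S\mapsto e^{i\theta}(S-\lambda)$, which preserves hyponormality, I may assume $\lambda=0$ and that a supporting line of $W(S)$ at $0$ is the imaginary axis, i.e. $\mathrm{Re}\,S=\tfrac12(S+S^*)\ge 0$. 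Then $\langle\mathrm{Re}(S)\xi,\xi\rangle=0$ gives $\mathrm{Re}(S)\xi=0$; hyponormality then shows that $N=\ker\mathrm{Re}(S)$ reduces $S$ and that $S|_N$ is skew-adjoint. Finally, extremality of $0=\lambda$ in $W(S)$ prevents $0$ from lying in the relative interior of the vertical segment $W(S|_N)$, which pins down the eigenvalue and forces $S\xi=0$; back in the original coordinates $S\xi=\lambda\xi$ and, by hyponormality, $S^*\xi=\bar\lambda\xi$. Cyclicity then collapses $\H_\psi$ to $\CC\xi$, so $\psi$ is the unique character with $\psi(T)=\lambda$, and this character is the desired boundary representation $\rho$ with $\rho(T)=\lambda$.

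The main obstacle is precisely the step in part (\ref{ah2}) producing the eigenvector $S\xi=\lambda\xi$. Merely attaining an extreme point of the numerical range at a unit vector does not make that vector an eigenvector---the nilpotent $\left(\begin{smallmatrix}0&1\\0&0\end{smallmatrix}\right)$ attains every boundary point of its numerical disk without any eigenvector there---so hyponormality must enter essentially. The delicate points are verifying that $\ker\mathrm{Re}(S)$ is reducing for $S$ and that $S$ restricts to a skew-adjoint operator on it, both of which come from the equality case of the hyponormal inequality $\|S^*\zeta\|\le\|S\zeta\|$ on that subspace, and then combining this skew-adjoint normal form with the extreme-point hypothesis to eliminate any purely imaginary eigenvalue.
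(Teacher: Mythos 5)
Your proof is correct. Part \eqref{ah1} is essentially the paper's own argument: identify states on $\oss_T$ with points of $W(T)$, extend the states witnessing a convex decomposition of $\lambda$ to $\cstar(T)$, invoke the unique-extension property of the boundary representation, and use purity of the character; the preliminary observation that a boundary representation with scalar $\rho(T)$ must be one-dimensional is a careful justification of something the paper uses tacitly. Part \eqref{ah2}, however, takes a genuinely more self-contained route. The paper first cites Arveson's Theorem 3.1.2 to produce the character $\rho$ with $\rho(T)=\lambda$, and then, for an arbitrary state extension $\Phi=\langle\pi(\cdot)\xi,\xi\rangle$ of $\rho|_{\oss_T}$, deduces $\pi(T)\xi=\lambda\xi$ and $\pi(T)^*\xi=\overline{\lambda}\xi$ by combining two cited facts: that positivity of $[\pi(T)^*,\pi(T)]$ forces $W(\pi(T))$ to equal the convex hull of $\sigma(\pi(T))$ (so $\lambda\in\sigma(\pi(T))$), and Hildebrandt's Satz 2. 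You replace both citations by a direct computation: normalize so that $\lambda=0$ and $\mathrm{Re}(S)\geq 0$, deduce $\mathrm{Re}(S)\xi=0$, use positivity of $[S^*,S]$ (equality in $\|S^*\zeta\|\leq\|S\zeta\|$ on $\ker \mathrm{Re}(S)$) to show that $N=\ker\mathrm{Re}(S)$ reduces $S$ with $S|_N$ skew-adjoint, and then use extremality of $0$ in $W(S)$ to force $S\xi=0$; cyclicity of $\xi$ collapses the GNS space to $\CC\xi$, so every state extension is multiplicative, hence equal to the one character sending $T$ to $\lambda$. Your route buys three things: it is self-contained (only Hahn--Banach extension of states is quoted); it isolates exactly where hyponormality is indispensable --- your nilpotent example is precisely the paper's own counterexample following the proposition, and it shows that the Hildebrandt citation cannot be read as ``attained extreme points are eigenvalues'' without the commutator hypothesis; and it never uses the hypothesis $\lambda\in\sigma(T)$, which your argument thus shows to be redundant in \eqref{ah2} (it is recovered a posteriori, since the resulting character annihilates $T-\lambda 1$). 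What the paper's route buys in exchange is brevity and the conceptually useful intermediate fact $W(\pi(T))=\mathrm{Conv}\,\sigma(\pi(T))$ for operators with positive self-commutator, of which your normal-form argument is in effect a hands-on proof in the special case needed.
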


\begin{proof} To prove \eqref{ah1}, note first that we have $\rho(T-\lambda 1)=0$. As $\rho$ is unital and
multiplicative, this shows that $\lambda\in\sigma(T)$. Also, since $\rho(T)$ is scalar, we
have that $\rho$ is a state on $\oss_T$, and thus $\lambda\in W(T)$. After we prove that $\lambda$
is an extreme point of $W(T)$, we will know that $\lambda\in\partial W(T)$.

Let $\phi=\rho|_{\oss_T}$.
Suppose that $\lambda_1,\lambda_2\in W(T)$ and that $\lambda=\frac{1}{2}\lambda_1+\frac{1}{2}\lambda_2$. As every state on
$\oss_T$ extends to a state on $\cstar(\oss_T)$ (by the Hahn--Banach Theorem and some positivity considerations),
there are states $\phi_1$ and $\phi_2$ on $\cstar(\oss_T)$ such that $\lambda_j=\phi_j(T)$, $j=1,2$. Thus, the state
$\psi=\frac{1}{2}\phi_1+\frac{1}{2}\phi_2$ is an extension of $\phi$ to $\cstar(\oss_T)$. Because $\rho$ is a
boundary representation for $\oss_T$, $\psi=\rho$. That is,
$\rho=\frac{1}{2}\phi_1+\frac{1}{2}\phi_2$. But since $\rho$ is a pure state (because it is
multiplicative),
we deduce that $\phi_1=\phi_2=\rho$; hence, $\lambda_1=\lambda_2=\lambda$, which implies that $\lambda$ is an extreme point of $W(T)$.

For the proof of \eqref{ah2},
the hypothesis $\lambda\in\sigma(T)\cap\partial W(T)$ implies that there is a homomorphism
$\rho:\cstar(\oss_T)\rightarrow\CC$ such that $\lambda=\rho(T)$ \cite[Theorem 3.1.2]{arveson1969}.
Assume that $\lambda$ is an extreme point of $W(T)$ and that $[T^*,T]=T^*T-TT^*$ is positive.
Let $\phi=\rho|_{\oss_T}$ and suppose that $\Phi$ is any state on $\cstar(T)$ that extends $\phi$.
Via the GNS construction, there are a Hilbert space $\H_\pi$, a representation $\pi:\cstar(\oss_T)\rightarrow\B(\H_\pi)$, and
a unit vector $\xi\in\H_\pi$ such that $\Phi(A)=\langle\pi(A)\xi,\xi\rangle$ for every $A\in\cstar(\oss_T)$. In particular, $\lambda=\langle\pi(T)\xi,\xi\rangle$. Now since the
numerical range of $\pi(T)$ is a subset of the numerical range of $T$, $\lambda$ is an extreme point of $W(\pi(T))$. Moreover, as $[\pi(T)^*,\pi(T)]=\pi\left([T^*,T]\right)$
is positive, $W(\pi(T))$ coincides with the convex hull of the spectrum of $\pi(T)$. Hence, the equation $\lambda=\langle\pi(T)\xi,\xi\rangle$ together
with $\lambda\in \sigma\left( \pi(T) \right)\cap \partial W(\pi(T))$ imply that
$\pi(T)\xi=\lambda\xi$ and $\pi(T)^*\xi=\overline\lambda \xi$ \cite[Satz2]{hildebrandt1966}. Thus, $\Phi$ is a homomorphism and agrees with $\rho$ on the generating set $\oss_T$;
hence, $\Phi=\rho$ and so $\rho$ is a boundary representation.
\end{proof}

It is interesting to contrast \eqref{ah1} of Proposition \ref{extremal spectra} with Theorem 3.1.2 of \cite{arveson1969}, which states that if
$\lambda\in\sigma(T)\cap\partial W(T)$, then $\lambda=\rho(T)$ for some boundary representation $\rho$ for $\osp_T$. In this latter assertion,
there is no requirement that $\lambda$ be an extreme point of $W(T)$, and this is one way in which we see that the
operator spaces $\osp_T$ and $\oss_T$ differ fundamentally.

In general a spectral point $\lambda\in\sigma(T)$ that also happens to be an extreme point of $W(T)$ does not give rise to a boundary representation (which explains the extra hypothesis in
assertion \eqref{ah2} of Proposition \ref{extremal spectra}). For example, with $T=\begin{bmatrix}0&1\\0&0\end{bmatrix}\oplus\begin{bmatrix}\frac{1}{2}\end{bmatrix}$,
the vectors $\xi=e_3$ and $\eta=\sqrt{\frac{1}{2}}(e_1+e_2)$ give rise to states $\rho(X)=\langle X\xi,\xi\rangle$ and $\psi(X)=\langle X\eta,\eta\rangle$ on $\cstar(T)$
such that $\rho(T)=\psi(T)=\frac{1}{2}
\in \mbox{ext}\,W(T)\cap\sigma(T)$; however, $\rho$ is a representation of $\cstar(T)$ whereas $\psi$ is not.

\bigskip

In Theorem \ref{theorem: boundary reps and numerical range} below,
numerical range considerations allow us to completely characterise one-dimensional
boundary representations for direct sums
of operators.

\begin{theorem}\label{theorem: boundary reps and numerical range}
Let $T=\ds\bigoplus_{j=1}^mT_j\subset\bigoplus_{j=1}^m\B(\H_j)$, where $m\in\NN$ and
$k_\ell=1$ for a fixed $\ell$. Let $\pi_\ell:\cstar(T)\to\CC$ be the irreducible representation
induced by $\bigoplus T_j\mapsto T_\ell$. Then
$\pi_\ell$ is a boundary representation if and only if $T_\ell\not\in\conv\bigcup_{j\ne\ell}W(T_j)$.
\end{theorem}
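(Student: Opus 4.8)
The plan is to reduce the boundary–representation property of $\pi_\ell$ to a uniqueness statement for state extensions and then read it off the block structure of $\cstar(T)$. Since $\pi_\ell$ has one–dimensional range it is automatically irreducible, and a ucp extension of $\pi_\ell|_{\oss_T}$ to $\cstar(T)$ is exactly a state $\psi$ on $\cstar(T)$ with $\psi(1)=1$ and $\psi(T)=T_\ell=:\lambda$ (whence $\psi(T^*)=\overline\lambda$). Thus $\pi_\ell$ is a boundary representation precisely when it is the \emph{only} state on $\cstar(T)$ taking the value $\lambda$ at $T$. The key tool is that, because $T=\bigoplus_jT_j$ is block diagonal, compression to the $j$-th block gives a surjective $*$-homomorphism $\sigma_j:\cstar(T)\to\cstar(T_j)$, and $\cstar(T)$ sits as a unital $\cstar$-subalgebra of the finite direct sum $\bigoplus_j\cstar(T_j)$. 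Every state $\psi$ therefore extends to the direct sum, where it is a convex combination of states on the summands; restricting back gives $\psi(A)=\sum_jc_j\,\omega_j(\sigma_j(A))$ for states $\omega_j$ on $\cstar(T_j)$ and weights $c_j\ge0$, so that $\psi(T)=\sum_jc_j\mu_j$ with $\mu_j=\omega_j(T_j)\in W(T_j)$.

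For the direction in which $\lambda\notin\conv\bigcup_{j\ne\ell}W(T_j)=:K$, I would take any state $\psi$ with $\psi(T)=\lambda$, write it in the form above, and group the blocks $j\ne\ell$ into a single point $\overline\mu\in K$. Since $W(T_\ell)=\{\lambda\}$ forces $\mu_\ell=\lambda$, this yields $\lambda=c_\ell\lambda+(1-c_\ell)\overline\mu$; if $c_\ell<1$ we may cancel and obtain $\lambda=\overline\mu\in K$, contradicting $\lambda\notin K$. Hence $c_\ell=1$, and because $\cstar(T_\ell)=\CC$ the state $\psi$ is the block evaluation $\pi_\ell$. This gives uniqueness, so $\pi_\ell$ is a boundary representation.

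For the converse I argue contrapositively: assuming $\lambda\in K$, I build a state $\psi\ne\pi_\ell$ with $\psi(T)=\lambda$. By definition of the convex hull, $\lambda=\sum_i t_i\mu_i$ with $\mu_i\in W(T_{j_i})$, $j_i\ne\ell$, $t_i>0$. If some $\mu_i\ne\lambda$ (in particular whenever $\lambda\notin\bigcup_{j\ne\ell}W(T_j)$), I pick states $\omega_i$ on $\cstar(T_{j_i})$ with $\omega_i(T_{j_i})=\mu_i$ and set $\psi=\sum_it_i\,\omega_i\circ\sigma_{j_i}$; the Cauchy–Schwarz estimate
$\psi\bigl((T-\lambda1)^*(T-\lambda1)\bigr)\ge\sum_it_i|\mu_i-\lambda|^2>0$,
against $\pi_\ell\bigl((T-\lambda1)^*(T-\lambda1)\bigr)=0$, separates $\psi$ from $\pi_\ell$. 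The remaining case is $\lambda\in W(T_{j_0})$ for some $j_0\ne\ell$, where I would use a single block and look for a state $\omega$ on $\cstar(T_{j_0})$ with $\omega(T_{j_0})=\lambda$ but $\omega\bigl((T_{j_0}-\lambda1)^*(T_{j_0}-\lambda1)\bigr)>0$, so that $\psi=\omega\circ\sigma_{j_0}\ne\pi_\ell$.

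The main obstacle is exactly this last step when $\lambda$ is an extreme point of $W(T_{j_0})$: the naive choice of $\omega$ collapses to the character $T_{j_0}\mapsto\lambda$ and merely reproduces $\pi_\ell$. To exclude this I would invoke the standing hypothesis that $T_{j_0}$ is irreducible with $\dim\H_{j_0}\ge2$, so that $\cstar(T_{j_0})$ admits no characters; together with the eigenvector rigidity at extreme points of the numerical range already used in Proposition \ref{extremal spectra} (\cite[Satz 2]{hildebrandt1966}), the vanishing $\omega\bigl((T_{j_0}-\lambda1)^*(T_{j_0}-\lambda1)\bigr)=0$ would force $\omega$ to be such a nonexistent character. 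Consequently every state attaining the value $\lambda$ has strictly positive variance and differs from $\pi_\ell$, completing the construction; the degenerate possibility $\dim\H_{j_0}=1$ with $T_{j_0}=\lambda$ is ruled out by the assumption that the summands $T_j$ are pairwise inequivalent.
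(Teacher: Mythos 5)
Your proof of the ``if'' direction ($T_\ell\notin\conv\bigcup_{j\ne\ell}W(T_j)$ implies $\pi_\ell$ is a boundary representation) is correct and takes a genuinely different route from the paper's. The paper argues geometrically: $\lambda=T_\ell$ is then a point of nondifferentiability of $\partial W(T)$, and for an arbitrary state extension one passes to the GNS representation and invokes Donoghue's theorem (corner points are eigenvalues) and Hildebrandt's theorem (eigenvectors at boundary points are reducing) to conclude the extension is multiplicative, hence equals $\pi_\ell$. You instead extend the state to $\bigoplus_j\cstar(T_j)$, decompose it there as $\sum_j c_j\,\omega_j\circ\sigma_j$, and use plain convexity together with $W(T_\ell)=\{\lambda\}$ to force $c_\ell=1$; this avoids the numerical-range machinery entirely and makes visible exactly where the hypothesis $\lambda\notin\conv\bigcup_{j\ne\ell}W(T_j)$ enters. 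Your variance estimate $\psi\bigl((T-\lambda1)^*(T-\lambda1)\bigr)\ge\sum_it_i|\mu_i-\lambda|^2>0$ in the other direction is also correct when some $\mu_i\ne\lambda$, and it is in fact more careful than the paper's corresponding separation step, because it distinguishes $\psi$ from $\pi_\ell$ on an element that certifiably lies in $\cstar(T)$.

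The gap is in the residual case of the ``only if'' direction, exactly where you sensed trouble, and your patch does not close it. First, the statement you invoke is false: an irreducible operator on a space of dimension at least $2$ can generate a C$^*$-algebra with characters --- the unilateral shift $S$ is irreducible, yet $\cstar(S)$ (the Toeplitz algebra) has a circle of characters. Second, the hypotheses you appeal to (irreducibility of the summands, pairwise inequivalence) are not part of the theorem being proved. Third, and most seriously, the stuck case is a genuine obstruction rather than a technicality of your method: take $m=2$, $T_\ell=[1]$ and $T_{j_0}=S$, so that $\lambda=1$ lies in $W(S)$, which is the closed unit disc, hence $\lambda\in K:=\conv\bigcup_{j\ne\ell}W(T_j)$, and $\lambda$ is an extreme point of $K$. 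The Hildebrandt rigidity you yourself cite then shows that \emph{every} state $\omega$ on $\cstar(S)$ with $\omega(S)=1$ is the character $\chi_1$; since here $\cstar(T)=\{(\chi_1(A),A):A\in\cstar(S)\}\cong\cstar(S)$ and the block unit $1_{j_0}$ does \emph{not} belong to $\cstar(T)$, every state extension of $\pi_\ell|_{\oss_T}$ collapses to $\pi_\ell$, so $\pi_\ell$ \emph{is} a boundary representation although $\lambda\in K$ (consistent with the fact, underlying Corollary \ref{isometry}, that the characters $\chi_z$ are boundary representations for $\oss_S$). No construction can produce the desired second extension in this situation.

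For comparison, the paper's proof of this direction separates its candidate $\psi=\sum_{j\ne\ell}\alpha_j\psi_j\circ\pi_j$ from $\pi_\ell$ by evaluating at the block unit: $\psi(1_j)\ge\alpha_j>0=\pi_\ell(1_j)$. That step tacitly assumes $1_j$ is available in $\cstar(T)$ (equivalently, that $\psi_j\circ\pi_j$ and $\pi_\ell$ really are distinct functionals on $\cstar(T)$), a nondegeneracy of the block decomposition that fails precisely in the example above. So what is missing from your argument is not a cleverer choice of $\omega$, but an additional hypothesis of exactly this kind, made explicit and then used; as it stands, the final paragraph of your proposal rests on a false lemma and on assumptions foreign to the statement.
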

\begin{proof}
Let us denote $\lambda=T_\ell$, and $\pi_j:\cstar(T)\to \B(\H_j)$ the representation $\bigoplus T_j
\mapsto T_j$.

Assume first that $\lambda\in\conv\bigcup_{j\ne\ell}W(T_j)$. Therefore, for each $j=1,\ldots,m$ with $j\ne\ell$
there exists a
state $\psi_j$ on $\cstar(T_j)$ such that $\lambda=\sum_{j\ne\ell}\alpha_j\psi_j(T_j)$ for some convex coefficients
$\alpha_1,\ldots,\alpha_{\ell-1},\alpha_{\ell+1}\,\ldots,\alpha_m$. Define a state $\psi$ on $\cstar(T)$
by $\psi=\sum_{j\ne\ell}\alpha_j\psi_j\circ\pi_j$.
Because $\psi(T)=\lambda$, we obtain
$\psi|_{\oss_{T}}=\pi_\ell|_{\oss_{T}}$. Now choose some $j$ with $\alpha_j\ne0$. Then
$\psi(1_j)\geq\alpha_j\psi_j(1_j)=\alpha_j>0$. But, as $j\ne\ell$, $\pi_\ell(1_j)=0$;
thus, $\pi_\ell|_{\oss_{T}}$ admits a ucp extension
from $\oss_{T}$ to $\cstar(\oss_{T})$ other than $\pi_\ell$,
and so $\pi_\ell$ is not a boundary representation for $\oss_{T}$.

Conversely, assume that $\lambda\not\in\conv\bigcup_{j\ne\ell}W(T_j)$.
Choose any state $\phi$ on $\cstar(\oss_{T})$ for which
$\phi|_{\oss_{T}}=\pi_\ell|_{\oss_{T}}$; that  is, $\phi$ is a state such that $\phi(T)=\lambda$.
The numerical range $W(T)$ of $T$ is
the convex hull of the numerical ranges $W(T_1),\ldots,W(T_m)$.
As $W(T_\ell)=\{\lambda\}$ and $\lambda$ is not in the convex hull of the
other numerical ranges we have that $W(T)$ is the convex set generated by the convex set
$\conv\bigcup_{j\ne\ell}W(T_j)$ and the external point $\lambda$; so
$\lambda$ is a point of nondifferentiability on the boundary of $W(T)$.
By the GNS decomposition, there are a Hilbert space $\H_\vartheta$, a representation
$\vartheta:\cstar(\oss_T)\rightarrow\B(\H_\vartheta)$, and
a unit vector $\xi\in\H_\vartheta$ such that $\phi(A)=\langle\vartheta(A)\xi,\xi\rangle$
for every $A\in\cstar(\oss_T)$. In particular, $\lambda=\langle\vartheta(T)\xi,\xi\rangle$.
Because the
numerical range of $\vartheta(T)$ is a subset of the numerical range of $T$, $\lambda$ is also
a point of nondifferentiability on the boundary of $W(\vartheta(T))$; therefore, $\lambda$ is
necessarily an eigenvalue of $\vartheta(T)$ (\cite[Theorem 1]{donoghue1957}).  Moreover, because
this eigenvalue $\lambda$ lies on the
boundary of the numerical range of $\vartheta(T)$,
 the equation $\lambda=\langle\vartheta(T)\xi,\xi\rangle$ implies that
$\vartheta(T)\xi=\lambda\xi$ and $\vartheta(T)^*\xi=\overline\lambda \xi$
\cite[Satz 1,2]{hildebrandt1966}. That is, $\phi$ is a homomorphism and it
agrees with $\pi_\ell$ on $\oss_{T}$;
hence, $\phi=\pi_\ell$ on $\cstar(\oss_{T})$, which proves that $\pi_\ell$ is a boundary representation.
\end{proof}

\begin{remark}
A characterisation of boundary representations of higher order appears in Theorem
\ref{matrix convex pure}. The implications of Theorems
\ref{theorem: boundary reps and numerical range} and \ref{matrix convex pure} to direct sums
of operators and to Jordan operators in particular will be explored in a further article.
\end{remark}

\begin{example}\label{eg1} For each $\lambda\in\CC$, let $T_\lambda\in\M_3(\CC)$ be given by
\[
T_\lambda\,=\,\left[\begin{array}{ccc} 0&1&0 \\ 0&0&0 \\ 0&0&\lambda\end{array}\right]\,.
\]
Then
\[
\cstare(\oss_{T_\lambda})\,=\,  \left\{
       \begin{array}{lcl}
          \M_2(\CC)   &\;&      \mbox{if }\; |\lambda|\leq 1/2 \\
          \M_2(\CC)\oplus\CC &\;& \mbox{if } \; |\lambda|>1/2
      \end{array}
      \right\}
      \,.
\]
\end{example}
\begin{proof}
The C$^*$-algebra generated by $\oss_{T_\lambda}$ is $\M_2(\CC)\oplus\CC$. Let $\pi:\cstar(\oss_{T_\lambda})\rightarrow\CC$ be the
map that sends each $X\in \cstar(\oss_{T_\lambda})$ to its (3,3)-entry. Thus, $\pi$ is an irreducible representation of $\cstar(\oss_{T_\lambda})$ on the
$1$-dimensional Hilbert space $\CC$.
Another irreducible representation of $\cstar(\oss_{T_\lambda})$ is the map
$\rho:\cstar(\oss_{T_\lambda})\rightarrow\M_2(\CC)$
given by $\rho(X)=V^*XV$, where $V=\left[\begin{array}{cc} 1&0 \\ 0& 1 \\ 0&0\end{array}\right]$. Up to unitary equivalence, $\pi$ and $\rho$ are the only irreducible representations
of $\cstar(\oss_{T_\lambda})$, and so at least one of these two must be a boundary representation. In fact, regardless of the choice of $\lambda$, $\rho$ is always a boundary representation,
for it were not, then $\pi$ would necessarily be the only boundary representation for $\oss_{T_\lambda}$,
which implies that the \v Silov ideal would be given by $\mathfrak S_{\oss_{T_\lambda}}=\ker\pi=\M_2(\CC)\oplus\{0\}$;
but if this were true, then the quotient $\cstar(\oss_{T_\lambda})/\mathfrak S_{\oss_{T_\lambda}}$ would be the $1$-dimensional algebra $\CC$, which would not
contain a copy of the $3$-dimensional operator system $\oss_{T_\lambda}$. Hence, $\rho$ is a boundary representation and
the only question to resolve is: for which $\lambda$ is $\pi$ a boundary representation?
To answer this, it is enough to
use Theorem \ref{theorem: boundary reps and numerical range} and to note that the numerical range of
$\begin{bmatrix}0&1\\0&0\end{bmatrix}$ is the closed disc of radius $1/2$ centred at the origin.
\end{proof}

\section{Normal Operators and Operators with Normal $W$-Dilations}

If $T$ is a normal operator, then $\cstar(T)$ is abelian; hence, so is $\cstare(\oss_T)$, as it is
the image through an epimorphism of $\cstar(T)$. We will analyse more carefully which abelian
$\cstar$-algebras arise in such
cases, and we will show that certain non-normal $T$ have abelian $\cstar$-envelopes (even though
in these cases $\cstar(T)$ is non-abelian).

It is well known that positive maps need not be completely positive, but there is a useful ``automatic complete positivity'' result that
we will make use of.

\begin{proposition}\label{auto cp} {\rm (\cite[Theorem 3.9]{Paulsen-book})}
If $\phi:\oss\rightarrow\ost$ is a positive linear map of operator systems, and if $\ost$ is an operator subsystem $\ost\subset\csta$
of an abelian $\cstar$-algebra $\csta$, then $\phi$ is completely positive.
\end{proposition}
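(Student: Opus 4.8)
The plan is to exploit the commutativity of $\csta$ to collapse the problem of complete positivity into a \emph{pointwise}, scalar-valued one. By the commutative Gelfand--Naimark theorem I would first identify $\csta$ with $C(X)$ for a compact Hausdorff space $X$ (its character space), so that $\ost\subset C(X)$ and $\phi$ is regarded as a positive linear map $\oss\rightarrow C(X)$. The structural fact driving everything is that positivity in $\mn(C(X))\cong C(X,\mn)$ is detected \emph{pointwise}: an element $F=[f_{ij}]$ is positive if and only if the scalar matrix $[f_{ij}(x)]\in\mn$ is positive for every $x\in X$. Equivalently, positivity of a matrix over $C(X)$ is tested by the characters $\delta_x:C(X)\rightarrow\CC$, $g\mapsto g(x)$.

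The crux is then the standard lemma that a positive linear functional on an operator system is automatically completely positive. To establish this, given a positive functional $f$ on $\oss$ and a positive $P=[p_{ij}]\in\mn(\oss)_+$, I would show that $[f(p_{ij})]\in\mn$ is positive by testing it against an arbitrary vector $\xi=(\xi_1,\dots,\xi_n)\in\CC^n$. Since the entries of a scalar congruence of $P$ are scalar combinations of the $p_{ij}$, congruence by scalar matrices preserves positivity for (concrete) operator systems, so $\xi^*P\xi=\sum_{i,j}\overline{\xi_i}\,p_{ij}\,\xi_j$ lies in $\oss_+$. Applying $f$ gives $\sum_{i,j}\overline{\xi_i}\,f(p_{ij})\,\xi_j=\langle[f(p_{ij})]\xi,\xi\rangle\ge 0$, and as $\xi$ is arbitrary this forces $[f(p_{ij})]\ge 0$. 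This is precisely the assertion that $f$ is $n$-positive, for every $n$.

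To finish, for each $x\in X$ the composition $\delta_x\circ\phi:\oss\rightarrow\CC$ is a positive linear functional, being the composite of the positive map $\phi$ with the positive functional $\delta_x$. Given any $P=[p_{ij}]\in\mn(\oss)_+$, the value of $\phi^{(n)}(P)=[\phi(p_{ij})]$ at a point $x$ is the scalar matrix $[(\delta_x\circ\phi)(p_{ij})]=(\delta_x\circ\phi)^{(n)}(P)$, which is positive by the lemma just proved. Since this holds at every $x\in X$, the pointwise criterion yields $\phi^{(n)}(P)\ge 0$ in $\mn(C(X))$, and hence $\phi$ is completely positive.

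Each ingredient is classical, so I do not anticipate a genuine obstacle; the only points needing care are the two reductions in the first paragraph---that positivity of matrices over $C(X)$ is truly a pointwise condition, and that the order $\phi$ must respect is the matrix order $\oss$ carries as an operator system (which is exactly what makes scalar-congruence positivity available). The conceptual heart is simply that, once the target is commutative, each character reduces the question to a scalar functional, and scalars alone certify positivity of a matrix---so no information beyond ordinary positivity is actually needed.
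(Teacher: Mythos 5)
Your proof is correct. The paper offers no argument of its own for this proposition---it simply cites \cite[Theorem 3.9]{Paulsen-book}---and your proof is essentially the standard one given there: identify the abelian algebra with $C(X)$, use that positivity in $\M_n(C(X))$ is detected pointwise by characters, and invoke the automatic complete positivity of positive linear functionals on an operator system (proved exactly by the scalar-congruence computation you give).
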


A \emph{function system} on a compact Hausdorff space $\Omega$ is a subset $\osf\subseteq C(\Omega)$ such that:
(i) $\osf$ is a vector space over $\CC$, closed in the topology of $C(\Omega)$;
(ii)  $f^*\in \osf$, for all $f\in \osf$;
 (iii) $1\in \osf$ (the constant function $x\mapsto 1$); and
 (iv) $\osf$ separates the points of $K$.
By the Stone--Weierstrass Theorem, the C$^*$-subalgebra of $C(\Omega)$
generated by $\osf$ is precisely $C(\Omega)$ itself.

A boundary for $\osf$ is a closed subset $\Omega_0\subseteq \Omega$ such that
for every $f\in \osf$ there is a $t_0\in \Omega_0$ such that $\|f\|=|f(t_0)|$. By a theorem of \v Silov,
there is a smallest compact subset $\shilov\osf$ of $\Omega$ that is contained in every boundary of $\osf$
and is itself a boundary of $\osf$. The set $\shilov \osf$ is known classically as the
\v Silov boundary of $\osf$.
In the language of $\cstar$-envelopes, \v Silov's theorem takes the following form:

\begin{theorem}\label{shilov} {\rm (\v Silov)} If $\osf$ is a function system on $\Omega$, then $\cstare(\osf)=C(\shilov\osf)$.
\end{theorem}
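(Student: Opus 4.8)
The plan is to use the fact that, since $\osf$ separates points and contains the constants, the Stone--Weierstrass theorem gives $\cstar(\osf)=C(\Omega)$, an abelian $\cstar$-algebra whose irreducible representations are exactly the point evaluations $\chi_t\colon f\mapsto f(t)$, $t\in\Omega$, each acting on the one-dimensional space $\CC$ (so that $\B(\CC)\cong\CC$). Consequently every boundary representation of $\osf$ is a character $\chi_t$, and the noncommutative Choquet boundary $\choquet\osf$ is naturally a subset of $\Omega$.

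First I would identify which $t$ yield boundary representations. As $\chi_t$ is one-dimensional, a ucp extension of $\chi_t|_\osf$ to $C(\Omega)$ is just a state, and by the Riesz representation theorem states on $C(\Omega)$ correspond to Borel probability measures $\mu$ on $\Omega$ via $f\mapsto\int_\Omega f\,d\mu$. The requirement that $\chi_t|_\osf$ have a unique such extension is precisely that $\delta_t$ be the only probability measure $\mu$ with $\int_\Omega f\,d\mu=f(t)$ for all $f\in\osf$, which is exactly the classical definition of the Choquet boundary recalled in the Introduction. Hence the noncommutative boundary $\choquet\osf\subseteq\Omega$ coincides with the classical Choquet boundary of $\osf$.

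Next I would compute the \v Silov ideal. Since $\ker\chi_t=\{f:f(t)=0\}$, we obtain $\mathfrak S_\osf=\bigcap_{t\in\choquet\osf}\ker\chi_t=\{f\in C(\Omega):f|_{\choquet\osf}=0\}$, and by continuity this equals $\{f:f|_{\overline{\choquet\osf}}=0\}$. Here I would invoke the classical theorem of function-system theory that the \v Silov boundary is the closure of the Choquet boundary, $\overline{\choquet\osf}=\shilov\osf$ (see, e.g., \cite{Phelps-book}); thus $\mathfrak S_\osf$ is the ideal of functions vanishing on $\shilov\osf$, and the standard identification $C(\Omega)/\{f:f|_{\shilov\osf}=0\}\cong C(\shilov\osf)$ by restriction gives $\cstar(\osf)/\mathfrak S_\osf\cong C(\shilov\osf)$.

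It remains to verify that $\osf$ genuinely has a noncommutative Choquet boundary, i.e. that $\iota_{\rm e}\colon\osf\to C(\shilov\osf)$, $f\mapsto f|_{\shilov\osf}$, is a complete isometry; this is the step I expect to require the most care, since the defining (scalar) boundary property of $\shilov\osf$ gives only isometry on $\osf$. The clean route is to show that $\iota_{\rm e}$ is a unital complete order isomorphism onto its image and then apply \cite[Proposition 13.3]{Paulsen-book}. Being the restriction of a $\cstar$-homomorphism, $\iota_{\rm e}$ is completely positive; it is injective because $f|_{\shilov\osf}=0$ forces $\|f\|=0$. For positivity of the inverse I would note that each $t\in\Omega$ admits a representing measure $\mu_t$ supported on $\shilov\osf$, obtained by extending the unital \emph{contraction} $f|_{\shilov\osf}\mapsto f(t)$ (contractive precisely because $\|f\|_\Omega=\|f|_{\shilov\osf}\|_{\shilov\osf}$, by the boundary property) to a state on $C(\shilov\osf)$; then $f|_{\shilov\osf}\ge 0$ gives $f(t)=\int_{\shilov\osf}f\,d\mu_t\ge 0$ for every $t$, so $\iota_{\rm e}^{-1}$ is positive. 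Since both maps are positive with abelian range, Proposition \ref{auto cp} upgrades them to complete positivity, whence $\iota_{\rm e}$ is a complete order isomorphism and hence a complete isometry. This shows $\osf$ has a noncommutative Choquet boundary and that $\cstare(\osf)=\cstar(\osf)/\mathfrak S_\osf=C(\shilov\osf)$.
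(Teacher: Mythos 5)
Your proof is correct, but note that the paper never proves Theorem \ref{shilov} at all: it is stated as the C$^*$-envelope reformulation of the classical theorem of \v Silov and simply quoted, so there is no paper proof to match. What you have written is exactly the missing dictionary, and it holds together. Stone--Weierstrass gives $\cstar(\osf)=C(\Omega)$; the irreducible representations of $C(\Omega)$ are the point evaluations $\chi_t$, so by Riesz representation the unique-ucp-extension condition on $\chi_t|_{\osf}$ is precisely the classical Choquet boundary condition on $t$; hence $\mathfrak S_\osf$ is the ideal of functions vanishing on $\overline{\choquet\osf}$, which equals $\shilov\osf$ by the classical theory you cite from \cite{Phelps-book}; and $C(\Omega)/\mathfrak S_\osf\cong C(\shilov\osf)$ via restriction. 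Your caution at the final step is well placed: the boundary property only makes $f\mapsto f|_{\shilov\osf}$ a unital isometry on $\osf$, and isometries of operator systems need not be complete isometries. Your fix---producing representing measures $\mu_t$ supported on $\shilov\osf$ by Hahn--Banach to show the inverse is positive, then upgrading both directions to complete positivity via Proposition \ref{auto cp} and concluding with \cite[Proposition 13.3]{Paulsen-book}---is exactly the right mechanism, and in fact it is the same ``positivity plus abelian range'' device the paper itself uses in the proof of Proposition \ref{normal dilation}. The only caveat worth stating is that your argument is not self-contained: it takes as input the two classical facts that $\shilov\osf$ exists (is a boundary) and that $\overline{\choquet\osf}=\shilov\osf$; that reliance is entirely appropriate here, since the paper likewise treats \v Silov's theorem as known, but if those inputs were also to be proved, the bulk of the work would lie there.
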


\bigskip

While for general normal operators there is a great variety of possible operator systems
and $\cstar$-envelopes, the case of selfadjoint operators is totally rigid:

\begin{proposition}\label{proposition: selfadjoint case}
If $T=T^*$, then $\cstare(\oss_T)=\CC\oplus\CC$.
\end{proposition}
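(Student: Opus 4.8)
The plan is to recognise $\oss_T$ as a function system and then invoke \v Silov's theorem (Theorem \ref{shilov}). Since $T=T^*$ is normal, $\cstar(T)=\cstar(\oss_T)$ is abelian, and the continuous functional calculus provides a $*$-isomorphism $\cstar(T)\cong C(\sigma(T))$ carrying $1$ to the constant function $1$ and $T$ to the identity function $\mathrm{id}:t\mapsto t$. Under this identification $\oss_T=\spann\{1,T\}$ becomes the space $\osf=\spann\{1,\mathrm{id}\}$ of affine functions on the compact set $\sigma(T)\subseteq\R$. Because $\mathrm{id}$ is real-valued, $\osf$ is conjugation-closed; it contains the constants and, the identity function being injective, it separates the points of $\sigma(T)$, so $\osf$ is a function system in the sense above. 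I would dispose of the degenerate scalar case separately (there $\oss_T=\CC 1$ and $\cstare(\oss_T)=\CC$) and assume henceforth that $T$ is non-scalar, so $\sigma(T)$ contains at least two points.

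By Theorem \ref{shilov} we have $\cstare(\oss_T)=\cstare(\osf)=C(\shilov\osf)$, so everything reduces to computing the \v Silov boundary of $\osf$. Writing $a=\min\sigma(T)$ and $b=\max\sigma(T)$ (so $a<b$, and both lie in $\sigma(T)$ by compactness), I claim that $\shilov\osf=\{a,b\}$, whence $C(\shilov\osf)=C(\{a,b\})=\CC\oplus\CC$, as desired.

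The claim splits into two assertions. First, $\{a,b\}$ is a boundary for $\osf$: for any $f=c_0+c_1\,\mathrm{id}\in\osf$ the function $t\mapsto|f(t)|^2=|c_0+c_1t|^2$ is a convex quadratic in the real variable $t$ with nonnegative leading coefficient $|c_1|^2$, so its maximum over $\sigma(T)\subseteq[a,b]$ is attained at one of the endpoints $a$ or $b$. Second, $\{a,b\}$ is minimal: the function $f(t)=t-b$ has $\|f\|=b-a$, attained on $\sigma(T)$ only at $t=a$, so every boundary for $\osf$ must contain $a$; symmetrically, $g(t)=t-a$ forces every boundary to contain $b$. Combining the two gives $\{a,b\}\subseteq\shilov\osf\subseteq\{a,b\}$.

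The computation is elementary once the function-system reformulation is in place, so the only point requiring care is the minimality half: one must exhibit, for each of the two extreme points of $W(T)=[a,b]$, an affine function peaking there alone, which the choices $t-b$ and $t-a$ supply. As a cross-check I would note the alternative, representation-theoretic route that avoids \v Silov entirely: the irreducible representations of $\cstar(T)=C(\sigma(T))$ are the point evaluations $\rho_s$, $s\in\sigma(T)$, and since $[T^*,T]=0\ge 0$, Proposition \ref{extremal spectra} identifies the boundary representations as exactly $\rho_a$ and $\rho_b$; the \v Silov ideal is then $\{f:f(a)=f(b)=0\}$ and the resulting quotient is again $\CC\oplus\CC$.
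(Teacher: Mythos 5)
Your proof is correct, and it takes a genuinely different route from the paper's. The paper performs the same Gelfand reduction to $\oss_z\subset C(\sigma(T))$, but then argues directly at the level of boundary representations: for each interior point $t_0<t<t_1$ of the spectrum it exhibits the competing state $f\mapsto\alpha f(t_0)+(1-\alpha)f(t_1)$, which agrees with the character $\pi_t$ on $\oss_z$ but not on $C(\sigma(T))$, so $\pi_t$ is not a boundary representation; it then concludes that the two endpoint evaluations \emph{must} be boundary representations because the envelope has to contain a copy of the two-dimensional $\oss_T$, and assembles $\CC\oplus\CC$ via Lemma \ref{lemma: the quotient as a product}. You instead stay entirely in the classical function-theoretic picture: you check that $\spann\{1,\mathrm{id}\}$ is a function system on $\sigma(T)$, compute its \v Silov boundary to be $\{a,b\}$ (the convex-quadratic argument for the upper bound and the peaking functions $t-b$, $t-a$ for minimality are both correct), and invoke Theorem \ref{shilov}. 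There is no circularity, since Theorem \ref{shilov} precedes the proposition; in effect your argument is the special case of the paper's later Proposition \ref{normal} in which the \v Silov boundary can be computed explicitly. What the paper's route buys is an explicit display of non-uniqueness of ucp extensions, the mechanism that recurs throughout the paper (and which the authors call ``more instructive''); what your route buys is brevity, concrete peaking functions witnessing minimality, and the correct observation that the scalar case must be set aside (there $\oss_T=\CC 1$ and the envelope is $\CC$), an implicit hypothesis the paper's proof also needs for its ``dimension at least $2$'' step. Your closing cross-check via Proposition \ref{extremal spectra} is precisely the alternative the paper mentions in its opening sentence and then declines to pursue.
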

\begin{proof}
One can deduce the conclusion from Proposition \ref{extremal spectra} and the fact that the numerical range of $T$ is a line segment (and thus has exactly two extreme points),
but we feel the following direct proof is more instructive.

As $\cstar(T)\simeq C(\sigma(T))$ as $\cstar$-algebras, this isomorphism restricts to a complete
isometry on $\oss_T$. So $\cstare(\oss_T)=\cstare(\oss_z)$, where $z$ is the function
$z:t\mapsto t$ in $C(\sigma(T))$.

So we want to identify the boundary representations of $\oss_z$ in $C(\sigma(T))$.
Since $C(\sigma(T))$ is an abelian
$\cstar$-algebra, each of its irreducible representation is one-dimensional, i.e. a character, and it is
given by point evaluation.

As $\sigma(T)$ is a compact subset of $\mathbb R$, it has a minimum and a maximum, say $t_0$ and $t_1$,
and every point in $\sigma(T)$ is a convex combination of $t_0$ and $t_1$.
Given any $t\in\sigma(t)$ with $t_0<t<t_1$, there exists $\alpha\in(0,1)$ with $t=\alpha t_0+(1-\alpha)t_1$. The
irreducible representation associated with $t$ is the map $\pi_t:f\mapsto f(t)$ in $C(\sigma(T))$. Now
consider the state $\psi:f\mapsto\alpha f(t_0)+(1-\alpha)f(t_1)$ on $C(\sigma(T))$. By considering
some $f\in C(\sigma(T))$ with $f(t_0)=1$, $f(t)=0$, we see that $\pi_t\ne\psi$.
But $\pi_t$ and $\psi$ agree on $\oss_z$; indeed,: if $f=\beta+\gamma z$,
\[
\psi(f)=\alpha(\beta+\gamma t_0)+(1-\alpha)(\beta+\gamma t_1)=\beta+\gamma(\alpha t_0+(1-\alpha)t_1)
=\beta+\gamma t=\pi_t(f).
\]
So $\pi_t|_{\oss_z}$ admits an extension other than $\pi_t$ (provided that $t\ne t_0,t_1$), which
shows that $\pi_t$ is not a boundary representation for $\oss_z$.

The only remaining candidates for boundary representations are $\pi_{t_0}$ and $\pi_{t_1}$. Both must be
boundary representations because the $\cstar$-envelope necessarily contains a copy of $\oss_T$ and so it has dimension
at least 2.
By Lemma \ref{lemma: the quotient as a product}, we conclude that $\cstare(\oss_z)=\CC\oplus\CC$.
\end{proof}

\begin{corollary}\label{corollary: all two-dimensional operator systems are isomorphic}
All two-dimensional operator systems are isomorphic.
\end{corollary}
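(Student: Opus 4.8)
The plan is to reduce an arbitrary two-dimensional operator system to the selfadjoint single-operator case already settled in Proposition \ref{proposition: selfadjoint case}, and then to promote the canonical complete isometry into the $\cstar$-envelope to a complete order isomorphism by a dimension count.

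First I would record that any operator system is the complexification of its selfadjoint part. Writing $X=\frac12(X+X^*)+i\,\frac{1}{2i}(X-X^*)$ exhibits every element as $a+ib$ with $a,b$ selfadjoint, and since $\oss=\oss^*$ both $a,b$ lie in $\oss$; moreover $\oss_{\rm sa}\cap i\,\oss_{\rm sa}=\{0\}$, so $\dim_{\R}\oss_{\rm sa}=\dim_{\CC}\oss=2$. As $1\in\oss_{\rm sa}$, I can then choose a selfadjoint $h\in\oss$ for which $\{1,h\}$ is linearly independent over $\R$, whence $\oss=\spann\{1,h\}$. Because $h=h^*$, this says precisely that $\oss=\oss_h$, the operator system generated by the (necessarily non-scalar) selfadjoint operator $h$.

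Next, Proposition \ref{proposition: selfadjoint case} would apply directly and give $\cstare(\oss)=\cstare(\oss_h)=\CC\oplus\CC$. The associated ucp map $\iota_{\rm e}:\oss\to\cstare(\oss)=\CC\oplus\CC$ is a unital complete isometry, since this is exactly what it means for $\oss$ to have a noncommutative Choquet boundary (which it does, being finite-dimensional, by Arveson's theorem). As $\iota_{\rm e}$ is injective and both $\oss$ and $\CC\oplus\CC$ have complex dimension $2$, the map $\iota_{\rm e}$ is a linear bijection; being in addition a unital complete isometry, it is then a unital complete order isomorphism by \cite[Proposition 13.3]{Paulsen-book}.

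Thus every two-dimensional operator system would be completely order isomorphic to $\CC\oplus\CC$, and the statement follows by transitivity of isomorphism. The only point requiring genuine care is the reduction to the selfadjoint case: one must confirm that $\oss_{\rm sa}$ is truly two (real) dimensional, so that $h$ is non-scalar and the hypothesis (and hence the dimension argument inside the proof) of Proposition \ref{proposition: selfadjoint case} is available. Once that is secured, the dimension count turning the canonical complete isometry into an isomorphism is immediate and is the heart of the argument.
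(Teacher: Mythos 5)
Your proof is correct and follows essentially the same route as the paper's: reduce to a selfadjoint generator $h$, apply Proposition \ref{proposition: selfadjoint case} to get $\cstare(\oss)=\CC\oplus\CC$, and use the two-dimensional count to upgrade the unital complete isometry into the envelope to a surjective complete order isomorphism. The additional details you supply (the complexification argument producing the non-scalar selfadjoint generator, and the appeal to Arveson's theorem to justify that the canonical map is a complete isometry) are precisely the points the paper leaves implicit.
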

\begin{proof}
It is easy to see that a two-dimensional operator system has a selfadjoint generator $T$. By Proposition
\ref{proposition: selfadjoint case}, $\cstare(\oss_T)=\CC^2$. This implies that there exists a unital complete
isometry $\psi:\oss_T\to\CC^2$. The image of $\psi$ is two-dimensional, so $\psi$ is onto,
and then $\oss_T\simeq\CC^2$
as operator systems.
\end{proof}

\bigskip

\begin{definition} Assume that $N\in\B(\H)$ is a normal operator. The \emph{function system associated with $N$}
is the operator subsystem $\osf_N\subset C(\sigma(N))$ defined by
\[
\osf_N\,=\,\mbox{\rm Span}\,\{1, \Gamma(N), \overline{\Gamma(N)}\}\,,
\]
where $\Gamma:\cstar(N)\rightarrow C(\sigma(N))$ is the Gelfand transform.
\end{definition}

\begin{proposition}\label{normal}If $N\in\B(\H)$ is normal, then $\cstare(\oss_N)=C(\shilov\osf_N)$.
\end{proposition}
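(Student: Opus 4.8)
The plan is to reduce the claim to an application of \v Silov's theorem (Theorem~\ref{shilov}) by transporting the operator system $\oss_N$ into a concrete function system via the Gelfand transform. Since $N$ is normal, $\cstar(N)=\cstar(\oss_N)$ is abelian and the Gelfand transform $\Gamma:\cstar(N)\to C(\sigma(N))$ is a unital $*$-isomorphism, hence a complete isometry. Because $\Gamma(1)=1$, $\Gamma(N)$ is the coordinate function $z:t\mapsto t$, and $\Gamma(N^*)=\overline{\Gamma(N)}$, the restriction $\Gamma|_{\oss_N}$ is a unital complete order isomorphism of $\oss_N$ onto $\osf_N=\spann\{1,\Gamma(N),\overline{\Gamma(N)}\}$.

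First I would invoke the invariance of the C$^*$-envelope under unital complete order isomorphisms recorded in the Introduction: applying that observation to $\Gamma|_{\oss_N}$ and to its inverse yields unital $*$-homomorphisms between $\cstare(\oss_N)$ and $\cstare(\osf_N)$ in both directions. As each of these C$^*$-envelopes is generated by the relevant copy of the operator system, the two homomorphisms are mutually inverse on those generating copies and hence everywhere, so $\cstare(\oss_N)\cong\cstare(\osf_N)$.

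Next I would check that $\osf_N$ is genuinely a function system on $\sigma(N)$: it is a complex vector space (finite-dimensional, hence closed), it is self-adjoint and contains the constants by construction, and it separates the points of $\sigma(N)$ because it contains the coordinate function $z$, which already separates points of any subset of $\CC$. With this verified, Theorem~\ref{shilov} applies to give $\cstare(\osf_N)=C(\shilov\osf_N)$, and combining with the previous paragraph yields $\cstare(\oss_N)=C(\shilov\osf_N)$, as required.

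There is no deep obstacle here; the content is entirely in recognizing that normality lets the Gelfand transform identify $\oss_N$ with a function system, after which \v Silov's theorem does the work. The only points that require a moment's care are confirming that $\Gamma|_{\oss_N}$ is a surjection onto $\osf_N$ (so that the two operator systems are genuinely identified, not merely one embedded in the other) and handling the degenerate case $N=\lambda 1$, where $\sigma(N)$ is a single point; there $\osf_N=\CC$ and $\shilov\osf_N=\sigma(N)$, so the identity $\cstare(\oss_N)=C(\shilov\osf_N)=\CC$ still holds.
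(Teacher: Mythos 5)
Your proposal is correct and follows essentially the same route as the paper: identify $\oss_N$ with the function system $\osf_N$ via the Gelfand transform (a unital complete order isomorphism, since $\Gamma$ is a $*$-isomorphism of the abelian algebra $\cstar(N)=\cstar(\oss_N)$), and then apply \v Silov's theorem (Theorem~\ref{shilov}). The extra verifications you include---that $\osf_N$ satisfies the function-system axioms, that the C$^*$-envelope identification works in both directions, and the degenerate case $N=\lambda 1$---are routine details the paper leaves implicit, not a different argument.
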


\begin{proof} Note that $\cstar(\oss_N)=\cstar(N)$. The Gelfand transform $\Gamma:\cstar(N)\rightarrow C(\sigma(N))$ is an
isomorphism of $\cstar$-algebras and so
the restriction of $\Gamma$ to $\oss_N$ is a unital completely isometric linear map of $\oss_N$ onto
the operator subsystem $\osf_N\subset C(\sigma(N))$. Thus, $\Gamma|_{ \oss_N}$ is a complete order isomorphism
and, hence,
$\cstare(\oss_N)=\cstare(\osf_N)=C(\shilov\osf_N)$.
\end{proof}

\begin{corollary}\label{unitary} If $U$ is a unitary operator, then $\cstare(\oss_U)=C(\sigma(U))$.
\end{corollary}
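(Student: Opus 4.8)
The plan is to recognise that a unitary operator is normal and to invoke Proposition \ref{normal}, which reduces the determination of $\cstare(\oss_U)$ to identifying the \v Silov boundary $\shilov\osf_U$ of the associated function system. First I would observe that, since $U$ is unitary, its spectrum $\sigma(U)$ is a compact subset of the unit circle $\TT$, and the Gelfand transform $\Gamma$ carries $U$ to the coordinate function $z:\lambda\mapsto\lambda$ on $\sigma(U)$. Consequently $\osf_U=\spann\{1,z,\bar z\}\subset C(\sigma(U))$, where $\bar z$ denotes the conjugate function $\lambda\mapsto\overline\lambda$ (which on $\TT$ coincides with $\lambda\mapsto\lambda^{-1}$).

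By Proposition \ref{normal} we have $\cstare(\oss_U)=C(\shilov\osf_U)$, and since $\shilov\osf_U\subseteq\sigma(U)$ by definition, it suffices to prove the reverse inclusion $\sigma(U)\subseteq\shilov\osf_U$. The key step is to exhibit, for each point of $\sigma(U)$, a function in $\osf_U$ that peaks there. Fix $\mu\in\sigma(U)$ and consider $f=1+\overline\mu\,z\in\osf_U$. For $\lambda\in\sigma(U)\subseteq\TT$ the product $\overline\mu\lambda$ again lies on $\TT$, so $|f(\lambda)|=|1+\overline\mu\lambda|\le 2$, with equality precisely when $\overline\mu\lambda=1$, that is, when $\lambda=\mu$. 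Thus $|f|$ attains its maximum over $\sigma(U)$ uniquely at $\mu$, so $\mu$ is a peak point of $\osf_U$.

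Because $\shilov\osf_U$ is itself a boundary for $\osf_U$, there must exist a point of $\shilov\osf_U$ at which $|f|$ attains its maximum modulus; by the previous paragraph that point can only be $\mu$, whence $\mu\in\shilov\osf_U$. As $\mu\in\sigma(U)$ was arbitrary, $\sigma(U)\subseteq\shilov\osf_U$, and together with the trivial reverse containment this yields $\shilov\osf_U=\sigma(U)$. Therefore $\cstare(\oss_U)=C(\sigma(U))$.

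I do not anticipate a genuine obstacle here. The only point requiring any care is the construction of the peak functions, and the single affine function $1+\overline\mu\,z$ handles every $\mu$ simultaneously; everything else is bookkeeping with Proposition \ref{normal}, the inclusion $\sigma(U)\subseteq\TT$, and the defining property of the \v Silov boundary.
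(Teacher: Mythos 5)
Your proof is correct and follows essentially the same route as the paper's: reduce via Proposition \ref{normal} to showing $\sigma(U)\subseteq\shilov\osf_U$, then exhibit a peak function at each spectral point. Your peak function $1+\overline\mu\,z=\overline\mu(z+\mu)$ is, up to the unimodular factor $\overline\mu$, exactly the paper's function $f_\mu(z)=z+\mu$, so the two arguments coincide.
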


\begin{proof} By definition, the \v Silov boundary of the function system $\osf_U$ is a compact subset of $\sigma(U)$. Therefore,
Proposition \ref{normal} shows that we need only prove the inclusion $\sigma(U)\subset\shilov\osf_U$. To this end, select $\lambda\in\sigma(U)$
and consider the function $f_\lambda\in\osf_U$ defined by
\[
f_\lambda(\mu)\,=\,\mu+\lambda\,,\;\mu\in\sigma(U)\,.
\]
For any $z\in\TT$, $|f_\lambda(z)|$ is the Euclidean distance between $z$ and $-\lambda$, and so the maximum modulus of
$f_\lambda$ on $\TT$ is attained at $\lambda$ and $|f_\lambda(\lambda)|>|f_\lambda(\mu)|$ for every $\mu\in\sigma(U)\setminus\{\lambda\}$.
Hence, $\lambda\in  \shilov\osf_U$.
\end{proof}

\begin{corollary}\label{circle} If $U$ is a unitary operator with $\sigma(U)=\TT$, then $\cstare(\oss_U)=C(\TT)$.
\end{corollary}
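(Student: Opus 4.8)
The plan is to obtain this as an immediate specialization of Corollary \ref{unitary}. That corollary already establishes, for \emph{every} unitary $U$, that $\cstare(\oss_U)=C(\sigma(U))$, the spectrum $\sigma(U)$ arising there precisely as the \v Silov boundary of the function system $\osf_U$. Since the present hypothesis is exactly $\sigma(U)=\TT$, I would simply substitute into the conclusion of Corollary \ref{unitary} to read off $\cstare(\oss_U)=C(\TT)$. There is no additional content to verify.

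If one prefers an argument that does not quote Corollary \ref{unitary} directly, I would instead show that $\shilov\osf_U=\TT$ and then invoke Proposition \ref{normal} together with Theorem \ref{shilov}. Concretely, for a fixed $\lambda\in\TT$ I would consider the function $f_\lambda(\mu)=\mu+\lambda$ on $\TT=\sigma(U)$; since $|f_\lambda(\mu)|$ is the Euclidean distance from $\mu$ to $-\lambda$, its modulus on the circle is uniquely maximised at the antipodal point $\mu=\lambda$, where it equals $2$. Hence $\lambda$ must belong to every boundary of $\osf_U$, and as $\lambda\in\TT$ was arbitrary this forces $\shilov\osf_U=\TT$. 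Proposition \ref{normal} then yields $\cstare(\oss_U)=C(\shilov\osf_U)=C(\TT)$.

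The statement presents no genuine obstacle: the only nontrivial point, the inclusion $\sigma(U)\subset\shilov\osf_U$, has already been carried out in the proof of Corollary \ref{unitary} by exactly the distance-to-$(-\lambda)$ computation indicated above. The sole thing worth remarking is that in this case $\cstar(U)=C(\TT)$ itself, so the \v Silov ideal $\mathfrak S_{\oss_U}$ is trivial and the C$^*$-envelope coincides with the entire C$^*$-algebra generated by $\oss_U$; no proper quotient is taken.
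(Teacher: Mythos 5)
Your proposal is correct and is exactly the paper's route: Corollary \ref{circle} is stated there as an immediate specialization of Corollary \ref{unitary}, obtained by substituting $\sigma(U)=\TT$, and your alternative unwinding via $\shilov\osf_U=\TT$ merely repeats the distance-to-$(-\lambda)$ computation already carried out in the proof of Corollary \ref{unitary}. Nothing further is required.
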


\bigskip

There are many operators that behave like normals when one is considering only their numerical range and spectrum. The following definition is meant
to capture such a situation.

\begin{definition} An operator $T\in\B(\H)$ has a \emph{normal $W$-dilation} if there is a Hilbert space $\K$ containing $\H$ as a subspace
and a normal operator $N\in\B(\K)$ such that:
\begin{enumerate}
\item $N$ is a dilation of $T$ (that is, $T=P_\H N|_{\H}$, where $P_\H\in\B(\K)$ is the projection of $\K$ onto $\H$), and
\item $W(T)=W(N)$.
\end{enumerate}
\end{definition}

The class of operators with normal $W$-dilations includes all Toeplitz operators on the Hardy space $H^2(\TT)$
and all subnormal operators \cite{Halmos-book}.

\begin{proposition}\label{normal dilation} If $N$ is a normal $W$-dilation of $T$, then $\oss_N$ and $\oss_T$ are completely order isomorphic.
\end{proposition}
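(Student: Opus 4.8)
The plan is to produce the complete order isomorphism explicitly, as the restriction of a compression, and to deduce complete positivity of its inverse from the abelian automatic‑complete‑positivity principle of Proposition \ref{auto cp}. Let $\Phi:\B(\K)\to\B(\H)$ denote the compression $\Phi(X)=P_\H X|_\H$, which is well known to be unital and completely positive, and set $\phi=\Phi|_{\oss_N}$. Since $\Phi(1)=1$, $\Phi(N)=P_\H N|_\H=T$, and $\Phi(N^*)=T^*$, the map $\phi$ sends $\oss_N$ onto $\oss_T$ via $a1+bN+cN^*\mapsto a1+bT+cT^*$ and is unital and completely positive. What remains is to show that $\phi$ is a bijection and that $\phi^{-1}$ is completely positive.

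First I would record an elementary observation used twice: for any operator $S$ and scalars $a,b,c$, one has $a1+bS+cS^*=0$ if and only if $a+b\mu+c\bar\mu=0$ for every $\mu\in W_{\rm s}(S)$. Indeed $\langle(a1+bS+cS^*)\xi,\xi\rangle=a+b\langle S\xi,\xi\rangle+c\overline{\langle S\xi,\xi\rangle}$, and an operator $R$ with $\langle R\xi,\xi\rangle=0$ for all $\xi$ must vanish. To see that $\phi$ is injective, suppose $a1+bT+cT^*=0$; then the function $\mu\mapsto a+b\mu+c\bar\mu$ vanishes on $W_{\rm s}(T)$, hence on its closure $W(T)$, which equals $W(N)\supseteq W_{\rm s}(N)$, and therefore $a1+bN+cN^*=0$. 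As $\phi$ is visibly onto $\oss_T$, it is a linear bijection.

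The crux of the argument is the positivity of $\phi^{-1}$, and this is exactly where the hypothesis $W(T)=W(N)$ is decisive. A self‑adjoint element of $\oss_T$ has the form $X=a1+bT+\bar bT^*$ with $a$ real, and $X\geq0$ if and only if $a+2\,\mbox{Re}(b\mu)\geq0$ for all $\mu\in W_{\rm s}(T)$, equivalently for all $\mu\in W(T)$ by continuity and density. The identical computation shows that $\phi^{-1}(X)=a1+bN+\bar bN^*\geq0$ precisely when $a+2\,\mbox{Re}(b\mu)\geq0$ for all $\mu\in W(N)$. Since $W(T)=W(N)$ these two conditions are the same, so $\phi^{-1}$ is positive.

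To finish, I would invoke the normality of $N$: the operator system $\oss_N$ sits inside the abelian C$^*$‑algebra $\cstar(N)$, so Proposition \ref{auto cp} upgrades the positivity of $\phi^{-1}:\oss_T\to\oss_N$ to complete positivity. Together with the complete positivity of $\phi$ coming from the compression, this makes $\phi$ a unital complete order isomorphism. The main obstacle is really the asymmetry of the two directions and the recognition of which hypothesis governs each: the dilation condition $T=P_\H N|_\H$ is what forces $\phi$ to be completely positive, whereas the numerical‑range equality is what makes $\phi^{-1}$ positive and the normality of $N$ is what then makes it automatically completely positive. Neither direction succeeds without its own hypothesis, and it is precisely the use of Proposition \ref{auto cp} that avoids having to verify matrix‑range equalities at every level by hand.
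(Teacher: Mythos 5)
Your proof is correct and follows essentially the same route as the paper's: the compression $P_\H(\cdot)|_\H$ gives the ucp map one way, the numerical-range equality $W(T)=W(N)$ gives positivity of the inverse, and Proposition \ref{auto cp} (abelian range) upgrades that to complete positivity. The only difference is that you spell out the injectivity/well-definedness of the inverse and the positivity computation via $W_{\rm s}$, details the paper's proof leaves implicit.
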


\begin{proof} Assume that $\K\supset\H$ and that $N\in\B(\K)$ is a normal $W$-dilation of $T\in\B(\H)$.
Define $\psi:\oss_{N}\rightarrow\oss_{T}$ by $\psi(R)=P_{\H}R|_{ \H}$, which is a ucp
map that sends $N$ to $T$. Now define a linear map $\phi:\oss_{T}\rightarrow\oss_{N}$ by
\[
\phi\left( \alpha1+\beta T+\gamma T^*\right)\,=\,\alpha1+\beta N +\gamma N^*\,,\;\mbox{ for all }\alpha,\beta,\gamma\in\C\,.
\]
As a linear transformation, $\phi=\psi^{-1}$. Thus, it remains to prove that $\phi$ is completely positive. First note that the hypothesis $W(T)=W(N)$
implies that, for $R\in\oss_{T}$,
$\phi(R)$ is positive if and only if $R$ is positive. Hence, $\phi$ is a positive linear map. The range of $\phi$ is $\oss_{N}$, which is an operator subsystem of the
$\cstar$-algebra $\cstar(N)$. Because the $\cstar$-algebra $\cstar(N)$ is abelian,
all positive linear maps into $\cstar(N)$ are completely positive (Proposition \ref{auto cp}).
In particular, $\phi=\psi^{-1}$ must be completely positive, which is to say that $\psi$ is a complete order isomorphism.
\end{proof}

\begin{corollary} If $T\in\B(\H)$ is a contraction such that $\TT\subset\sigma(T)$, then $\cstare(\oss_T)=C(\TT)$.
\end{corollary}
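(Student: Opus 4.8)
The plan is to produce a unitary operator $U$ that is simultaneously a normal $W$-dilation of $T$ and has full spectrum $\sigma(U)=\TT$, so that the conclusion drops out by combining Proposition \ref{normal dilation} with Corollary \ref{circle}.

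First I would pin down the numerical range and show $W(T)=\overline{\mathbb D}$, the closed unit disc. Since $T$ is a contraction, $W(T)\subseteq\overline{\mathbb D}$. On the other hand $W(T)$ is compact and contains $\sigma(T)$, because $\sigma(T)\subseteq\overline{W(T)}=W(T)$; hence $\TT\subseteq\sigma(T)\subseteq W(T)$, and since $W(T)$ is convex it must contain $\conv\TT=\overline{\mathbb D}$. Combining the two inclusions gives $W(T)=\overline{\mathbb D}$.

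Next I would invoke the Sz.-Nagy dilation theorem to obtain a unitary $U$ on a Hilbert space $\K\supseteq\H$ with $T=P_\H U|_{\H}$, and argue that $U$ is \emph{automatically} a normal $W$-dilation. Indeed, the compression $\psi(R)=P_\H R|_{\H}$ is a unital completely positive map of $\oss_U$ onto $\oss_T$ sending $U$ to $T$, so composing any state $\phi$ on $\oss_T$ with $\psi$ produces a state on $\oss_U$ with $\phi(T)=(\phi\circ\psi)(U)\in W(U)$; thus $W(T)\subseteq W(U)$. Since $U$ is itself a contraction, $W(U)\subseteq\overline{\mathbb D}=W(T)$, whence $W(U)=W(T)$. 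Proposition \ref{normal dilation} then gives a complete order isomorphism $\oss_T\cong\oss_U$, so that $\cstare(\oss_T)=\cstare(\oss_U)$.

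The remaining, and most delicate, point is to upgrade $\sigma(U)\subseteq\TT$ to the equality $\sigma(U)=\TT$. Because $U$ is normal, $W(U)=\conv\sigma(U)$, so $\conv\sigma(U)=\overline{\mathbb D}$. Every point of $\TT$ is an extreme point of $\overline{\mathbb D}$, and an extreme point of the convex hull of a compact set necessarily lies in that set; hence $\TT\subseteq\sigma(U)$, which together with $\sigma(U)\subseteq\TT$ forces $\sigma(U)=\TT$. Corollary \ref{circle} now yields $\cstare(\oss_U)=C(\TT)$, and therefore $\cstare(\oss_T)=C(\TT)$. The one genuinely non-formal step is this extreme-point argument forcing the spectrum of the dilation to fill the entire circle; everything else is routine once the unitary dilation is in hand.
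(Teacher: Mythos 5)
Your proof is correct and follows essentially the same route as the paper: obtain a unitary dilation $U$ of $T$, check that it is a normal $W$-dilation with $\sigma(U)=\TT$, and then combine Proposition \ref{normal dilation} with Corollary \ref{unitary} (equivalently Corollary \ref{circle}); the paper simply uses the explicit Halmos $2\times 2$ block dilation instead of citing Sz.-Nagy. The only real difference is one of detail: the facts $W(T)=W(U)=\overline{\mathbb D}$ and $\sigma(U)=\TT$, which the paper merely asserts, are proved by you carefully, including the Milman-type extreme-point argument forcing the dilation's spectrum to fill the whole circle.
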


\begin{proof} Every contraction has a unitary dilation \cite{Halmos-book}; explicitly, one such unitary dilation $U$ is given by
\[
U=\begin{bmatrix}T&(1-TT^*)^{1/2}\\ -(1-T^*T)^{1/2}&T^*\end{bmatrix}.
\]
The
condition $\TT\subset\sigma(T)$ implies, therefore, that $W(T)$ and $W(U)$ coincide with the closed unit disc and that $\sigma(U)=\TT$. Hence,
Proposition \ref{normal dilation} asserts that $\oss_U$ and $\oss_T$ are completely order isomorphic, and so $\cstare(\oss_T)=\cstare(\oss_U)$.
Corollary \ref{unitary} yields $\cstare(\oss_U)=C(\TT)$.
\end{proof}

Recall that an isometry $V$ is \emph{proper} if $V$ is not unitary.

\begin{corollary}\label{isometry} If $V$ is a proper isometry, then $\cstare(\oss_V)=C(\TT)$.
\end{corollary}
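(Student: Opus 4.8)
The plan is to reduce the statement to the preceding corollary, which asserts that a contraction $T$ with $\TT \subset \sigma(T)$ satisfies $\cstare(\oss_T) = C(\TT)$. Every isometry is a contraction, since $\|V\xi\| = \|\xi\|$ forces $\|V\| = 1$, so it suffices to prove that a proper isometry $V$ satisfies $\TT \subset \sigma(V)$; the conclusion $\cstare(\oss_V) = C(\TT)$ then follows immediately from that corollary.

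To establish the spectral inclusion I would exploit that a proper isometry fails to be surjective, so the wandering subspace $L = \H \ominus V\H$ is nonzero. Fixing a unit vector $e \in L$, the vectors $\{V^n e\}_{n \geq 0}$ are orthonormal: for $n > m$ one has $\langle V^n e, V^m e \rangle = \langle V^{n-m} e, e \rangle = 0$ because $V^{n-m} e \in V\H$ while $e \perp V\H$. Consequently, for each scalar $z$ with $|z| < 1$ the series $\xi_z = \sum_{n \geq 0} \bar z^{\,n} V^n e$ converges, is nonzero (its coordinate along $V^0 e = e$ equals $1$), and, using $V^* V = I$ together with $V^* e = 0$, one computes $V^* \xi_z = \bar z\, \xi_z$. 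Hence $\bar z$ is an eigenvalue of $V^*$, so $\bar z \in \sigma(V^*)$ and therefore $z \in \sigma(V)$, for every $|z| < 1$. Since $\sigma(V)$ is closed and is contained in the closed unit disc (because $V$ is a contraction), this forces $\sigma(V) = \overline{\mathbb D}$, and in particular $\TT \subset \sigma(V)$.

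There is no serious obstacle: the only substantive point is the spectral inclusion $\TT \subset \sigma(V)$, and the wandering-vector construction above — which is just the shift part of the Wold decomposition of $V$ realised explicitly — settles it in a self-contained way. Once this is in hand, the corollary is an immediate instance of the preceding one. Alternatively, one could route through Proposition \ref{normal dilation}, dilating $V$ to its minimal unitary extension $U$, which satisfies $\sigma(U) = \TT$ and $W(U) = W(V) = \overline{\mathbb D}$, so that $U$ is a normal $W$-dilation of $V$ and Corollary \ref{circle} applies; but appealing directly to the contraction corollary is cleaner.
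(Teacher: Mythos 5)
Your proof is correct and takes essentially the same route as the paper: reduce to the preceding corollary by establishing that a proper isometry is a contraction with $\TT\subset\sigma(V)$. The paper disposes of the spectral inclusion in one line by citing the Wold decomposition, whereas you verify it directly via the wandering-vector construction (producing eigenvectors of $V^*$ for every point of the open disc); this is just a self-contained unpacking of the same fact, not a different argument.
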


\begin{proof}  By the Wold Decomposition, the spectrum of a proper isometry $V$ necessarily contains $\TT$.
\end{proof}

As was mentioned above, for any operator $T$ one has an epimorphism $\pi:\cstar(T)\to\cstare(\oss_T)$. Whenever
this $\pi$ is not an isomorphism the \v Silov ideal, being the kernel of $\pi$, is nontrivial; in particular, $\cstar(T)$ cannot be simple.
Using this straightforward idea, we deduce the following fact from the results of this section:

\begin{corollary}
Let $T$ be an operator that is not a scalar multiple of the identity, and such that any of the following holds:
\begin{enumerate}
\item $T$ has a normal $W$-dilation;
\item $T$ is a Toeplitz operator on $H^2(\TT)$;
\item $T$ is subnormal;
\item $T$ is a contraction with $\TT\subset\sigma(T)$;
\item $T$ is a  proper isometry.
\end{enumerate}
Then $\cstar(T)$ is not simple.
\end{corollary}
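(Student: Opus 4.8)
The plan is to funnel all five hypotheses into the single situation of an operator admitting a normal $W$-dilation, and then to invoke the general principle noted just above: the \v Silov ideal is exactly the kernel of the canonical epimorphism $\pi\colon\cstar(T)\to\cstare(\oss_T)$, so $\cstar(T)$ fails to be simple as soon as we can rule out that $\pi$ is an isomorphism onto a simple algebra. Rather than argue injectivity of $\pi$ case by case, I would argue by contradiction from the assumption that $\cstar(T)$ is simple, which gives a uniform treatment of all cases.

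First I would dispatch the reductions. Conditions (2) and (3) imply (1) by the observation, recorded after the definition of a normal $W$-dilation, that Toeplitz operators on $H^2(\TT)$ and subnormal operators possess normal $W$-dilations. Condition (4) implies (1) via the explicit unitary dilation $U$ displayed above, which is a normal $W$-dilation precisely because $\TT\subset\sigma(T)$ forces $W(U)=W(T)$. Condition (5) implies (4), since by the Wold decomposition a proper isometry is a contraction whose spectrum contains $\TT$. Hence it suffices to prove the statement under hypothesis (1).

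So let $N$ be a normal $W$-dilation of $T$, with $T$ not a scalar multiple of the identity. Proposition \ref{normal dilation} gives a complete order isomorphism between $\oss_T$ and $\oss_N$, and then Proposition \ref{normal} yields $\cstare(\oss_T)=\cstare(\oss_N)=C(\shilov\osf_N)$, which is abelian. Now suppose, for contradiction, that $\cstar(T)$ is simple. The image of $\pi$ carries a completely isometric copy of $\oss_T$, which has dimension at least two because $T$ is not a scalar; thus $\pi\neq0$, and simplicity forces $\ker\pi=\mathfrak S_{\oss_T}=\{0\}$, so $\pi$ is an isomorphism of $\cstar(T)$ onto the abelian algebra $\cstare(\oss_T)$. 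Then $\cstar(T)$ itself is abelian, i.e.\ $T$ is normal and $\cstar(T)\cong C(\sigma(T))$; but a commutative C$^*$-algebra $C(\sigma(T))$ is simple only when $\sigma(T)$ is a single point, forcing $T$ to be a scalar, a contradiction.

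I expect the one genuinely delicate point---and the reason a naive ``the source is non-abelian while the target is abelian, so $\pi$ cannot be injective'' argument is insufficient---to be the normal case itself: there $\pi$ may well be an isomorphism (for instance when $T$ is a unitary with full spectrum $\TT$, where $\cstar(T)=\cstare(\oss_T)=C(\TT)$), so the \v Silov ideal can vanish. The contradiction-based phrasing above absorbs this cleanly, because simplicity of $\cstar(T)$ is then defeated \emph{directly} by the fact that a nondegenerate $C(X)$ is never simple, rather than through the nontriviality of $\ker\pi$.
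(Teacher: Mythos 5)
Your proof is correct, and it follows the route the paper intends: the paper gives no explicit argument for this corollary, deferring instead to the remark immediately preceding it (the canonical epimorphism $\pi:\cstar(T)\to\cstare(\oss_T)$ whose kernel is the \v Silov ideal) together with ``the results of this section,'' which are precisely the reductions and abelian-envelope computations you carry out---(2),(3)$\Rightarrow$(1) by the remark after the definition of normal $W$-dilations, (4)$\Rightarrow$(1) via the explicit unitary dilation, (5)$\Rightarrow$(4) via the Wold decomposition, and then Propositions \ref{normal dilation} and \ref{normal} to get $\cstare(\oss_T)=C(\shilov\osf_N)$. Where you go beyond the paper is in noticing that its literal justification---``whenever $\pi$ is not an isomorphism, $\cstar(T)$ cannot be simple''---does not by itself settle every case: under hypotheses (1)--(4) the operator $T$ may be normal (e.g.\ a non-scalar self-adjoint Toeplitz operator, or a unitary with $\sigma(T)=\TT$), in which case $\pi$ can be an isomorphism and the \v Silov ideal can vanish, so non-simplicity must instead come from the fact that an abelian C$^*$-algebra of dimension at least two is never simple. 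Your contradiction framing absorbs both alternatives uniformly: if $\cstar(T)$ were simple then $\ker\pi=\{0\}$, forcing $\cstar(T)$ to be isomorphic to the abelian algebra $C(\shilov\osf_N)$, whence $T$ is normal with one-point spectrum, i.e.\ a scalar, contrary to hypothesis. This is not a different method but a genuine (if small) repair of a gap in the paper's one-line argument, and your identification of the normal case as the delicate point is exactly right.
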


\section{Finite-Dimensional Boundary Representations}

Finite-dimensional irreducible representations of $\cstar(\oss_T)$ play a role similar to that of an eigenvalue for an operator. We show in this section that such a
representation $\rho$ is a boundary representation for $\oss_T$ only if $\rho(T)$ is an extremal element in a certain convex set.

\begin{definition} Let $V$ be a complex vector space and assume that $\mathfrak K_k\subset\M_k(V)$ is a nonempty set, for every $k\in\mathbb N$. Let
$\mathfrak K=(\mathfrak K_k)_{k\in\mathbb N}$.
\begin{enumerate}
\item The sequence $\mathfrak K$ is \emph{matrix convex in $V$}
if, for every $k$,
$\displaystyle\sum_{j=1}^mA_j^*X_jA_j\in\mathfrak K_k$, whenever
$m\in\mathbb N$, $X_j\in\mathfrak K_{n_j}$, $A_j\in M_{n_j,
k}(\CC)$, and $\displaystyle\sum_{j=1}^mA_j^*A_j=1\in\M_k(\CC)$.
\item An element $X\in\mathfrak K_k$ is a \emph{matrix extreme
point} of a matrix convex set $\mathfrak K$ in $V$ if the equation
$X=\displaystyle\sum_{j=1}^mA_j^*X_jA_j$, where $X_j\in\mathfrak
K_{n_j}$, $A_j\in M_{n_j,k}(\CC)$ of rank $n_j$, and
$\displaystyle\sum_{j=1}^mA_j^*A_j=1\in\M_k(\CC)$, holds only if
each $n_j=k$ and there are unitaries $U_1,\dots, U_m\in\M_k(\CC)$
such that $X_j=U_j^*XU_j$ for all $j=1,\dots,m$.
\end{enumerate}
\end{definition}

We shall be interested in the matricial range of an operator,
which was introduced by Arveson in \cite{arveson1972} and which received subsequent study in, for example,
 \cite{arveson1972,bunce--salinas1976,smith--ward1980}.

\begin{definition} The \emph{matricial range} of an operator $T\in\B(\H)$ is the
sequence $\mathbb W(T)=\left( W_k(T)\right)_{k\in\mathbb N}$ of
subsets $W_k(T)\subset\M_k(\CC)$ defined by
\[
W_k(T)\,=\,\{\phi(T)\,:\,\phi:\oss_T\rightarrow\M_k(\CC) \mbox{ is a ucp map } \}\,.
\]
\end{definition}

It is well known that each $W_k(T)$ is compact and that $W(T)$ is matrix convex in $V=\CC$.
The set $W_1(T)$ coincides with the numerical range of $T$.

\begin{definition} If $\oss$ and $\ost$ are operator systems and $\phi,\psi:\oss\rightarrow\ost$ are completely positive linear maps such that $\phi-\psi$ is completely positive,
then $\psi$ is said to be \emph{subordinate} to $\phi$, which is denoted by $\psi\leq_{\rm cp}\phi$.
If, for given $\phi$, the only completely positive maps $\psi$ that are subordinate to $\phi$
are those $\psi$ of the form $\psi=t\,\phi$ for some $t\in[0,1]\subset\mathbb R$, then $\phi$ is said to be \emph{pure}.
\end{definition}

A completely positive linear map $\phi:\csta\rightarrow\B(\K)$, where is $\csta$ is a unital C$^*$-algebra,
is pure if and only if the representation $\pi$ that arises in the minimal Stinespring decomposition of $\phi$ is irreducible
\cite[Corollary 1.4.3]{arveson1969}. In contrast, very little can be said in general about pure maps of operator systems that are not C$^*$-algebras,
and it is in general very difficult to identify which completely positive linear maps of an operator system
are pure. However, for operator systems of the form $\oss_T$, a ucp map $\phi:\oss_T\rightarrow\M_k(\CC)$
is pure if and only if $\phi(T)\in W_k(T)$ is a matrix extreme point of
$\mathbb W(T)$ \cite[Theorem 5.1]{farenick2004}.

\begin{theorem}\label{matrix convex pure} Suppose that $T\in\B(\H)$ and that
$\rho:\cstar(T)\rightarrow\M_k(\CC)$ is an irreducible representation.
\begin{enumerate}
\item\label{mr thm 1} If $\rho$ is a boundary representation for $\oss_T$, then $\rho(T)$
is a  matrix extreme point of $\mathbb W (T)$ and $\rho|_{ \oss_{T}}$ is a pure ucp map $\oss_T\to \M_k(\CC)$.
\item\label{mr thm 2} If $\cstar(T)$ is $k$-subhomogeneous and if  $\rho|_{ \oss_{T}}$
is a pure ucp map of $\oss_T\to\M_k(\CC)$, then $\rho$ is a boundary representation for $\oss_T$.
\end{enumerate}
\end{theorem}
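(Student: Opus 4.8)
The plan is to treat the two implications separately, leaning on the cited characterisation \cite[Theorem 5.1]{farenick2004} that a ucp map $\phi\colon\oss_T\to\M_k(\CC)$ is pure if and only if $\phi(T)$ is a matrix extreme point of $\mathbb W(T)$; this makes the two conclusions of \eqref{mr thm 1} equivalent, so for part \eqref{mr thm 1} it suffices to prove that $\rho|_{\oss_T}$ is pure. First I would take any $\psi\leq_{\rm cp}\rho|_{\oss_T}$ and, using Arveson's extension theorem, extend both $\psi$ and $\rho|_{\oss_T}-\psi$ to completely positive maps $\Psi$ and $\Xi$ on $\cstar(T)$. Since $\psi(1)+(\rho|_{\oss_T}-\psi)(1)=\rho(1)=1$, the sum $\Psi+\Xi$ is unital, hence a ucp extension of $\rho|_{\oss_T}$; because $\rho$ is a boundary representation this forces $\Psi+\Xi=\rho$, so that $\Psi\leq_{\rm cp}\rho$ as maps of the C$^*$-algebra $\cstar(T)$. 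As $\rho$ is an irreducible representation it is a pure completely positive map of $\cstar(T)$ \cite[Corollary 1.4.3]{arveson1969}, whence $\Psi=s\rho$ for some $s\in[0,1]$ and therefore $\psi=\Psi|_{\oss_T}=s\,\rho|_{\oss_T}$. This proves purity, and with it all of \eqref{mr thm 1}.

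For part \eqref{mr thm 2} the goal is to show that $\rho$ has the unique extension property: every ucp map $\Phi\colon\cstar(T)\to\M_k(\CC)$ with $\Phi|_{\oss_T}=\rho|_{\oss_T}$ must equal $\rho$ (then irreducibility of $\rho$ gives that it is a boundary representation). I would fix such a $\Phi$ and pass to its minimal Stinespring dilation $\Phi(\cdot)=V^*\pi(\cdot)V$, with $\pi\colon\cstar(T)\to\B(\H_\pi)$ a representation and $V\colon\CC^k\to\H_\pi$ an isometry whose range is cyclic for $\pi$. The key structural claim is that $\pi$ is unitarily equivalent to a multiple of $\rho$. To extract this, note that $\rho(T)=V^*\pi(T)V$ realises $\rho(T)$ as a compression of $\pi(T)$; decomposing $\pi$ into its irreducible constituents, each of which has dimension at most $k$ because $\cstar(T)$ is $k$-subhomogeneous, exhibits $\rho(T)$ as a matrix convex combination of points of $\mathbb W(T)$ at levels $\leq k$. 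Since $\rho|_{\oss_T}$ is pure, $\rho(T)$ is a matrix extreme point of $\mathbb W(T)$ sitting at the maximal level $k$; after factoring each coefficient through its range to obtain a \emph{proper} combination, matrix extremality forces every constituent to occur at level exactly $k$ and to be unitarily equivalent to $\rho(T)$, hence (as $\{1,T,T^*\}$ generates $\cstar(T)$) every constituent representation is unitarily equivalent to $\rho$.

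Once $\pi\cong\rho\otimes 1_{\H_0}$ is in hand, I would finish as follows. Writing $F$ for an arbitrary positive contraction in the commutant $\pi(\cstar(T))'=1_k\otimes\B(\H_0)$, Arveson's Radon--Nikodym theorem identifies $A\mapsto V^*F\pi(A)V$ with a completely positive map subordinate to $\Phi$; restricting to $\oss_T$ and invoking purity of $\rho|_{\oss_T}$ gives $V^*FV=t_F 1$ and $V^*F\pi(T)V=t_F V^*\pi(T)V$ for a scalar $t_F$. Feeding these relations, together with $\pi=\rho\otimes 1$ and the irreducibility of $\rho$, into the cyclicity of the dilation forces the multiplicity $\dim\H_0$ to be $1$ and $V$ to be unitary. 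Then $\Phi=\ad V^*\circ\pi$ is a representation agreeing with $\rho$ on $\{1,T,T^*\}$, so $\Phi=\rho$, as required.

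The step I expect to be the main obstacle is the structural claim in the second paragraph: turning the single compression $\rho(T)=V^*\pi(T)V$ into a genuine matrix convex combination of level-$\leq k$ points and concluding, via matrix extremality at the top level $k$, that the dilation is concentrated on the unitary orbit of $\rho$. The difficulty is that the irreducible decomposition of $\pi$ is in general a direct integral rather than a direct sum, so the argument must be phrased measure-theoretically; this is the noncommutative, $k$-dimensional analogue of the classical fact that a probability measure whose barycentre is an extreme point is a point mass. It is precisely here that $k$-subhomogeneity is indispensable, since it caps the dilation dimension, so a matrix extreme point living at the maximal level $k$ admits no nontrivial matricial dilation. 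A convenient way to make this rigorous is to replace $\cstar(T)$ by its $k$-homogeneous ideal, of the form $\M_k(\CC)\otimes C_0(Y)$, on which $\rho$ is carried by a point of $Y$; there ucp extensions become $\M_k(\CC)$-valued measures and the concentration argument becomes transparent.
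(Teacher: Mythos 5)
Your part (1) is correct, and it is essentially the paper's argument run in the opposite direction: the paper proves directly that $\rho(T)$ is a matrix extreme point (extending the compressions to matrix states of $\cstar(T)$, using the boundary property to identify the resulting sum with $\rho$, and using purity of $\rho$ as a completely positive map of $\cstar(T)$ to turn the coefficients into unitaries), and then cites \cite[Theorem 5.1]{farenick2004} to get purity of $\rho|_{\oss_T}$; you prove purity of $\rho|_{\oss_T}$ directly and cite the same theorem to get matrix extremality. Both versions turn on the same two ingredients --- the unique extension property and \cite[Corollary 1.4.3]{arveson1969} --- so your variant is legitimate and, if anything, slightly cleaner.

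Part (2) contains a genuine gap, and you have located it yourself. You analyze an \emph{arbitrary} ucp extension $\Phi=V^*\pi V$ of $\rho|_{\oss_T}$ and need the structural claim that $\pi$ is a multiple of $\rho$. Your route to that claim --- decompose $\pi$ into irreducible constituents and apply matrix extremality of $\rho(T)$ to the resulting matrix convex combination --- is sound only when $\pi$ is a \emph{finite} direct sum of irreducibles. In general the minimal Stinespring representation of an extension is a direct integral, the ``combination'' becomes an integral of compressions, and matrix extreme points are defined by finite combinations only; to conclude anything you would need a barycentric (Bauer-type) integral representation theorem for matrix convex sets, which you neither prove nor cite. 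Your proposed repair is also not sound as stated: a $k$-homogeneous C$^*$-algebra is the section algebra of a possibly nontrivial $\M_k(\CC)$-bundle, not necessarily of the form $\M_k(\CC)\otimes C_0(Y)$, and one would additionally have to control the part of an extension that does not vanish on the quotient by that ideal.

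The paper bypasses all of this with one convexity observation, which is the idea missing from your plan: the set $C_\rho$ of ucp extensions of $\rho|_{\oss_T}$ to $\cstar(T)$ is BW-compact and convex, so by Krein--Milman it suffices to show that every \emph{extreme point} $\phi$ of $C_\rho$ equals $\rho$. By the proof of \cite[Theorem B]{farenick2000}, such a $\phi$ is a pure matrix state of $\cstar(T)$ (this is precisely where purity of $\rho|_{\oss_T}$ enters), and a pure completely positive map has an \emph{irreducible} minimal Stinespring representation \cite[Corollary 1.4.3]{arveson1969}. Irreducibility plus $k$-subhomogeneity gives $\dim\H_\pi\leq k$ at once; the isometry $V:\CC^k\rightarrow\H_\pi$ forces $\dim\H_\pi=k$, so $V$ is unitary, $\phi$ is multiplicative, and $\phi=\rho$ since the two representations agree on the generating set $\oss_T$. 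No decomposition of a reducible representation, and hence no measure-theoretic machinery, is ever needed. If you wish to keep your architecture, you must first make this reduction to pure (extreme) extensions; once $\phi$ is pure, the dilation is already irreducible and your structural claim becomes trivial.
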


\begin{proof}  Assume that $\rho$ is a boundary representation for $\oss_T$.
Let $\Lambda=\rho(T)$ and suppose that
$\Lambda=\displaystyle\sum_{j=1}^mA_j^*\Omega_jA_j$ for $\Omega_j\in W_{n_j}(T)$
and $n_j\times k$ matrices  $A_j$ of rank $n_j$ satisfying $\displaystyle\sum_{j=1}^mA_j^*A_j=1$.
As $\Omega_j\in W_{n_j}(T)$, there are
ucp maps $\phi_j:\cstar(T)\rightarrow\M_{n_j}(\CC)$ such that $\phi_j(T)=\Omega_j$,
and so the matricial state
$\phi=\sum_j A_j^*\phi_j A_j$, whereby $X\mapsto \sum_j A_j^*\phi_j(X)A_j$,
is a ucp extension of $\rho{}|_{ \oss_T}$. By hypothesis, $\phi$ must equal $\rho$; hence, for each $j$,
\[
A_j^*\phi_j A_j \,\leq_{\rm cp}\,\rho\,.
\]
Since $\rho$ is an irreducible representation, it is pure as a completely
positive linear map of $\cstar(T)$ into $\M_k(\CC)$ \cite[Corollary 1.4.3]{arveson1969}.
Thus, there are $t_j\in[0,1]$
such that
\[
A_j^*\phi_j A_j \,=\,t_j\rho\,.
\]
Let $U_j=t_j^{-1/2}A_j$. Then evaluation at $1\in\cstar(T)$ gives us $U_j^*U_j=1\in M_k(\CC)$. So
$U_j$ is isometric and has rank $k$; we knew that $A_j$ (and so $U_j$) has rank $n_j$, and
we conclude that $n_j=k$. Then $U_j\in\M_{k}(\CC)$ is a unitary. But
$U_j^*\phi_j U_j=\rho$ implies that $\Omega_j=U_j\Lambda  U_j^*$ for each $j$, which shows that
$\Lambda$ is a matrix extreme point of $\mathbb W(T)$.
Therefore, by \cite[Theorem 5.1]{farenick2004}, $\rho|^{\phantom{T}}_{\oss_{T}}$
is a pure ucp map $\oss_T\to\M_k(\CC)$.

Conversely, suppose that $\cstar(T)$ is $k$-subhomogeneous and that
$\rho|^{\phantom{T}}_{\oss_{T}}$ is a pure ucp map of $\oss_T\to\M_k(\CC)$. Let
$C_\rho$ be the BW-compact, convex set of of all ucp maps $\psi:\cstar(T)\rightarrow\M_{k}(\CC)$
that extend $\rho|^{\phantom{T}}_{\oss_{T}}$. By the proof of \cite[Theorem B]{farenick2000},
every extreme point $\phi$ of $C_\rho$ is a pure matrix state of $\cstar(T)$, and so we need only show that the only pure extension $\phi$ of $\rho|_{ \oss_T}$ to $\cstar(T)$
is $\phi=\rho$. To this end, let $\phi=v^*\pi v$ be a minimal Stinespring decomposition of $\phi$, where $\pi:\cstar(T)\rightarrow\B(\H_\pi)$ is a representation and
$v:\CC^{k}\rightarrow\H_\pi$ is an isometry. Because $\phi$ is pure, $\pi$ is necessarily irreducible
\cite[Corollary 1.4.3]{arveson1969}. Hence, $\dim\H_\pi\leq k$, as $\cstar(T)$ is $k$-subhomogeneous. But because $v$ is an isometry, necessarily $\dim\H_\pi=k$.
Thus, $v$ is a unitary;
it follows that $\phi=v^*\pi v$ is multiplicative; as it agrees with $\rho$ in the generating
set $\oss_T$, we get that $\phi=\rho$.
\end{proof}

An operator $T\in\B(\H)$ is \emph{$k$-normal} if any elements
$X_1,\dots, X_{2k}$ in the von Neumann algebra $\mathcal N_T$ generated by $T$, satisfies
\[
\sum_{\tau\in \mathbb S_{2k}}\,{\epsilon(\tau)} X_{\tau(1)}\cdots X_{\tau(2k)}\,=\,0,
\]
where $\mathbb S_{2k}$ denotes the group of permutations on $\{1,\dots,2k\}$
and $\epsilon(\tau)$ denotes the parity (even or odd) of a permutation $\tau$.
Because the C$^*$-algebra
generated by a $k$-normal operator is $k$-subhomogeneous \cite{bunce--deddens1972}, we obtain the following result:

\begin{corollary}\label{k-normal} If $T$ is a $k$-normal operator, then the
following statements are equivalent for a representation $\rho:\cstar(T)\rightarrow\M_k(\CC)$:
\begin{enumerate}
\item\label{k-normal1} $\rho$ is a boundary representation for $\oss_T$;
\item\label{k-normal2} $\rho(T)$ is a matrix extreme point of $\mathbb W(T)$;
\item\label{k-normal3} $\rho|_{\oss_T}$ is a pure ucp map.
\end{enumerate}
\end{corollary}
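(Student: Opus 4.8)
The plan is to assemble the corollary directly from the two halves of Theorem \ref{matrix convex pure}, using $k$-normality only to supply the subhomogeneity hypothesis that the converse half demands. The essential observation, already recorded in the text preceding the statement, is that by the cited result of Bunce and Deddens \cite{bunce--deddens1972} the C$^*$-algebra $\cstar(T)$ generated by a $k$-normal operator $T$ is $k$-subhomogeneous. This is precisely the extra condition required in part \eqref{mr thm 2} of Theorem \ref{matrix convex pure}, so for $k$-normal $T$ both directions of that theorem become available, and the corollary is a matter of fitting the pieces together.

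First I would record the equivalence of \eqref{k-normal2} and \eqref{k-normal3}. This holds for \emph{any} operator $T$, not merely $k$-normal ones, and is nothing more than \cite[Theorem 5.1]{farenick2004} as quoted above: a ucp map $\phi:\oss_T\to\M_k(\CC)$ is pure if and only if $\phi(T)$ is a matrix extreme point of $\mathbb W(T)$. Applying this with $\phi=\rho|_{\oss_T}$, so that $\phi(T)=\rho(T)$, gives \eqref{k-normal2} $\Leftrightarrow$ \eqref{k-normal3} at once, with no appeal to $k$-normality.

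Next I would close the loop with \eqref{k-normal1}. The implication \eqref{k-normal1} $\Rightarrow$ \eqref{k-normal3} (and simultaneously \eqref{k-normal1} $\Rightarrow$ \eqref{k-normal2}) is exactly the content of part \eqref{mr thm 1} of Theorem \ref{matrix convex pure}, which assumes only that $\rho$ is an irreducible representation and a boundary representation, and imposes no condition on $\cstar(T)$. For the reverse implication \eqref{k-normal3} $\Rightarrow$ \eqref{k-normal1} I would invoke part \eqref{mr thm 2} of Theorem \ref{matrix convex pure}: since $\cstar(T)$ is $k$-subhomogeneous by the opening observation, purity of the ucp map $\rho|_{\oss_T}$ forces $\rho$ to be a boundary representation for $\oss_T$. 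Combining these with the equivalence of \eqref{k-normal2} and \eqref{k-normal3} closes the cycle.

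The step that does the genuine work is the very first one, the appeal to \cite{bunce--deddens1972} to pass from $k$-normality of $T$ to $k$-subhomogeneity of $\cstar(T)$; this is the only place where the hypothesis is used in an essential way, and it is what distinguishes $k$-normal operators from an arbitrary operator possessing a $k$-dimensional irreducible representation. Everything else is a bookkeeping combination of Theorem \ref{matrix convex pure} and \cite[Theorem 5.1]{farenick2004}. It is worth emphasizing in the write-up that \eqref{k-normal2} and \eqref{k-normal3} are equivalent for all $T$, whereas it is precisely the $k$-subhomogeneity, and hence the $k$-normality, that buys the equivalence of these with \eqref{k-normal1}. I do not anticipate a real obstacle beyond ensuring that the subhomogeneity is correctly matched to the dimension $k$ appearing in the target $\M_k(\CC)$ of $\rho$.
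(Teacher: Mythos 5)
Your proposal is correct and is essentially the paper's own proof: the paper derives the corollary in one line by combining Theorem \ref{matrix convex pure} with the Bunce--Deddens result that $\cstar(T)$ is $k$-subhomogeneous for $k$-normal $T$, exactly as you do, with the equivalence of \eqref{k-normal2} and \eqref{k-normal3} supplied by \cite[Theorem 5.1]{farenick2004} as quoted before the theorem. The only cosmetic point (present equally in the paper) is that the corollary says ``representation'' while the theorem assumes irreducibility; this is harmless since each of the three conditions forces irreducibility, e.g.\ a reducible $\rho$ restricts to a non-pure ucp map on $\oss_T$.
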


\section{Irreducible Periodic Weighted  Shift Operators}

In this section we present the main result (Theorem \ref{theorem: periodic shift})
of the paper. The operators we consider are irreducible periodic weighted unilateral shifts on $\ell^2(\mathbb N)$; however, it is instructive to
consider first the case of unilateral weighted shifts on finite-dimensional Hilbert spaces.

\begin{definition} If $\CC^*:=\CC\setminus\{0\}$ and
$\xi=\displaystyle\sum_{i=1}^d \xi_ie_i\in(\CC^*)^d$, then the \emph{irreducible weighted unilateral shift} with weights $\xi_1,\dots,\xi_d$ is the operator $W(\xi)$ on $\CC^{d+1}$ given by the matrix
\[
W(\xi)\,=\, \left[\begin{array}{ccccc}
0 &&&& 0 \\
 \xi_1 & 0 &&& \\
 & \xi_2 & \ddots && \\
 &&\ddots &0 & \\
 &&& \xi_{d} & 0 \end{array} \right] \,.
 \]
\end{definition}

\begin{proposition} The C$^*$-envelope of an irreducible weighted unilateral shift acting on $\CC^{d+1}$ is $\M_{d+1}(\CC)$. Furthermore, if
$\xi,\eta\in(\CC^*)^d$, then the operator systems $\oss_{W(\xi)}$ and $\oss_{W(\eta)}$ are unitally completely order isomorphic if and only if $|\xi|=|\eta|$,
where, for $\nu\in\CC^d$, $|\nu|\in\mathbb R_+^d$ denotes the vector of moduli of the coordinates of $\nu$.
\end{proposition}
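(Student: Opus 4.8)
The plan rests on two ingredients: the first assertion of the proposition (that the $\cstar$-envelope is $\M_{d+1}(\CC)$) together with the principle recorded in the Introduction, namely that every unital complete order isomorphism of operator systems extends to a $\cstar$-isomorphism of their $\cstar$-envelopes. For the first assertion I would begin by checking that $W(\xi)$ is irreducible. Since every weight is nonzero, $\ker W(\xi)^*=\CC e_1$ is one-dimensional while $e_1$ is cyclic for $W(\xi)$ (the vectors $W(\xi)^ke_1=\xi_1\cdots\xi_k\,e_{k+1}$ form a basis); hence any projection commuting with both $W(\xi)$ and $W(\xi)^*$ preserves $\CC e_1$, so it fixes or kills $e_1$ and is therefore $0$ or $1$. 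By the double commutant theorem $\cstar(W(\xi))=\M_{d+1}(\CC)$, which is simple. The \v Silov ideal $\mathfrak S_{\oss_{W(\xi)}}$ is an ideal of this simple algebra, hence $\{0\}$ or everything; it cannot be everything, since the canonical map $\oss_{W(\xi)}\to \cstar(W(\xi))/\mathfrak S_{\oss_{W(\xi)}}$ is completely isometric (Arveson) and so the quotient carries a copy of the nonzero operator system. Thus $\mathfrak S_{\oss_{W(\xi)}}=\{0\}$ and $\cstare(\oss_{W(\xi)})=\M_{d+1}(\CC)$.

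For the sufficiency of $|\xi|=|\eta|$ I would exhibit an explicit diagonal unitary. Writing $\omega_i=\eta_i/\xi_i$, which is unimodular because $|\xi_i|=|\eta_i|$, the entries $z_1=1$ and $z_{i+1}=\overline{\omega_i}\,z_i$ define a unitary $U=\mathrm{diag}(z_1,\dots,z_{d+1})$ with $U^*W(\xi)U=W(\eta)$. Conjugation $X\mapsto U^*XU$ is a $\cstar$-isomorphism of $\M_{d+1}(\CC)$, and its restriction to $\oss_{W(\xi)}$ is a unital complete order isomorphism onto $\oss_{W(\eta)}$ carrying $W(\xi)$ to $W(\eta)$.

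For the necessity, suppose $\phi\colon\oss_{W(\xi)}\to\oss_{W(\eta)}$ is a unital complete order isomorphism taking $W(\xi)$ to $W(\eta)$. By the extension principle, $\phi$ and $\phi^{-1}$ extend to unital $\cstar$-homomorphisms between the $\cstar$-envelopes, which by the first assertion are both $\M_{d+1}(\CC)$; the two extensions compose to a homomorphism fixing the generating set $\oss_{W(\xi)}$, hence the identity, so the extension $\pi$ of $\phi$ is a $\cstar$-automorphism of $\M_{d+1}(\CC)$. Every automorphism of a full matrix algebra is inner, so $\pi(X)=U^*XU$ for a unitary $U$, whence $U^*W(\xi)U=\phi(W(\xi))=W(\eta)$ and $W(\xi),W(\eta)$ are unitarily equivalent. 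To recover the moduli I would use the unitary invariants $\|W(\xi)^ie_1\|^2=\prod_{j\le i}|\xi_j|^2$, noting that $U$ carries the one-dimensional space $\ker W(\xi)^*=\CC e_1$ onto $\ker W(\eta)^*$; comparing consecutive products then gives $|\xi_i|=|\eta_i|$ for every $i$, that is, $|\xi|=|\eta|$.

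The step I expect to be the crux is controlling where the generator is sent. If one assumes only an abstract unital complete order isomorphism $\oss_{W(\xi)}\to\oss_{W(\eta)}$, then $\phi(W(\xi))=a\,1+bW(\eta)+cW(\eta)^*$ for scalars $a,b,c$; computing $\tr\big(U^*W(\xi)U\big)=0$ and $\tr\big((U^*W(\xi)U)^2\big)=0$ (both zero because $W(\xi)$ is nilpotent) forces $a=0$ and $bc=0$. This leaves $W(\xi)$ unitarily equivalent to $bW(\eta)$ (or to $cW(\eta)^*$), which yields only $|\xi|=|b|\,|\eta|$, and the modulus $|b|$ is \emph{not} pinned down by order-isomorphism alone: conjugation by a suitable $U$ already produces a unital complete order isomorphism $\oss_{W(\xi)}\to\oss_{W(\eta)}$ whenever $|\xi|$ is any positive multiple of $|\eta|$. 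Consequently the hypothesis has to be understood as requiring the distinguished generators to correspond (equivalently, $W(\xi)$ and $W(\eta)$ to be unitarily equivalent), and it is exactly this generator-preservation that excludes the rescaling and delivers $|\xi|=|\eta|$.
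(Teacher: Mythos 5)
Your proposal is correct in substance and, for both halves, follows essentially the same route as the paper: the first assertion is proved exactly as you do it (irreducibility gives $\cstar(W(\xi))=\M_{d+1}(\CC)$, which is simple, so the \v Silov ideal is a proper ideal and hence zero), and the second is proved by showing that a unital complete order isomorphism between these irreducible operator systems is implemented by a unitary, reducing the question to unitary similarity of weighted shift matrices, which holds precisely when $|\xi|=|\eta|$. The differences are minor: where you obtain the implementing unitary from the extension principle for $\cstar$-envelopes together with innerness of automorphisms of $\M_{d+1}(\CC)$, the paper cites Arveson's implementation theorem \cite[Theorem 0.3]{arveson1972} directly; and where you verify the similarity criterion by an explicit diagonal unitary and the unitary invariants $\|W(\xi)^ie_1\|$, the paper appeals to ``direct computation'' or to \cite[Theorem 3.2]{farenick--gerasimova--shvai2011}. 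Both of your substitutes are valid in this finite-dimensional setting.

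Your closing paragraph is more than a side remark: it identifies a genuine defect that the paper's own proof shares. The paper passes from ``$\phi$ is implemented by an automorphism'' to ``there is a unitary $U$ such that $W(\xi)=U^*W(\eta)U$,'' which tacitly assumes $\phi(W(\xi))=W(\eta)$; implementation alone only gives $\phi(W(\xi))=UW(\xi)U^*\in\oss_{W(\eta)}$, that is, $UW(\xi)U^*=a1+bW(\eta)+cW(\eta)^*$. Your rescaling observation shows this cannot be repaired without reinterpreting the statement: since $\oss_{tW}=\oss_{W}$ as subsets of $\M_{d+1}(\CC)$ for every scalar $t\neq 0$, the operator systems $\oss_{W(\xi)}$ and $\oss_{W(t\xi)}$ are literally equal---so certainly unitally completely order isomorphic---for every $t>0$, while $|\xi|\neq|t\xi|$ when $t\neq 1$. (Your trace argument also admits the case $b=0$, $c\neq 0$, which produces further examples with reversed weights, since $W(\eta)^*$ is, in the reversed basis, a weighted shift with weights $\overline{\eta_d},\dots,\overline{\eta_1}$.) So the ``only if'' direction, read literally, is false; the proposition must either be read as you propose---the isomorphism is required to carry $W(\xi)$ to $W(\eta)$, equivalently $W(\xi)$ and $W(\eta)$ are unitarily equivalent---or its conclusion weakened to say that $|\xi|$ is a positive scalar multiple of $(|\eta_1|,\dots,|\eta_d|)$ or of the reversed tuple $(|\eta_d|,\dots,|\eta_1|)$. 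Under the generator-preserving reading, your argument (and the paper's) is complete.
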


\begin{proof} If $\xi\in(\CC^*)^d$, then the operator system $\oss_{W(\xi)}$ is irreducible and,
hence, $\cstar(\oss_{W(\xi)})=\M_{d+1}(\CC)$, which is simple.
Therefore, the \v Silov boundary ideal for $\oss_{W(\xi)}$ is necessarily
trivial and so $\cstare(\oss_{W(\xi)})=\cstar(\oss_{W(\xi)})=\M_{d+1}(\CC)$.

Assume now that there is a unital complete order isomorphism
$\phi:\oss_{W(\xi)}\rightarrow\oss_{W(\eta)}$. As both $W(\xi)$ and $W(\eta)$ are irreducible,
$\phi$ is necessarily implemented by an automorphism of $\M_{d+1}(\CC)$
\cite[Theorem 0.3]{arveson1972}; that is, there is a unitary $U$ such that $W(\xi)=U^*W(\eta)U$.
But $W(\xi)$
and $W(\eta)$ are unitarily similar if and only if $|\xi|=|\eta|$ (by direct computation or by applying \cite[Theorem 3.2]{farenick--gerasimova--shvai2011}).
\end{proof}

Returning to the case of irreducible $p$-periodic weighted unilateral shifts on $\ell^2(\mathbb N)$,
the image of any such
operator in the Calkin algebra generates a $p$-homogeneous C$^*$-algebra, and in this case Theorem \ref{matrix convex pure} (or Corollary \ref{k-normal}) could be
invoked. However, Theorem \ref{matrix convex pure} is an abstract characterisation
which yields limited information in specific cases. Therefore, this section aims
to give full information about the noncommutative Choquet boundary and the C$^*$-envelope of $\oss_W$ for
irreducible periodic weighted unilateral shifts $W$.

 \begin{definition} A \emph{weighted unilateral shift operator} is an operator $W$ on $\ell^2(\mathbb N)$
defined on the standard orthonormal basis $\{e_n\,:\,n\in\mathbb N\}$ of $\ell^2(\mathbb N)$ by
\[
We_n=w_ne_{n+1}\,,\;n\in\mathbb N,
\]
where the \emph{weight sequence} $\{w_n\}_{n\in\mathbb N}$ for $W$ consists of
nonnegative real numbers with $\sup_n w_n<\infty$. If there is a $p\in\mathbb N$
such that $w_{n+p}=w_n$ for every $n\in\mathbb N$,
then $W$ is called a \emph{periodic}
unilateral weighted shift of \emph{period} $p$. If at least one of $w_1,\ldots,w_p$ is not repeated in the
list, we say that $W$ is \emph{distinct}.
\end{definition}

Proposition \ref{isometry}
demonstrates that the C$^*$-envelope of the operator system $\oss_W$ generated by a
periodic unilateral weighted shift operator $W$ of period $p=1$
is the abelian $C^*$-algebra $C(\TT)$.
To determine the C$^*$-envelope of an irreducible periodic unilateral weighted shift operator of period $p>1$,
a notion related to matrix convexity comes into play.

\begin{definition} Assume that $\mathfrak C\subset\M_k(\CC)$ is a nonempty set.
\begin{enumerate}
\item $\mathfrak C$ is \emph{C$^*$-convex} if $\displaystyle\sum_{j=1}^mA_j^*X_jA_j\in\mathfrak C$
for every $m\in\mathbb N$, $X_1,\dots, X_m\in\mathfrak C$, and $A_1,\dots, A_m\in\M_k(\CC)$ satisfying $\displaystyle\sum_{j=1}^mA_j^*A_j=1$.
\item An element $X\in\mathfrak C$ is a
\emph{C$^*$-extreme point} of a C$^*$-convex set $\mathfrak C$ if the equation $X=\displaystyle\sum_{j=1}^mA_j^*X_jA_j$, for
$X_1,\dots, X_m\in\mathfrak C$ and invertible $A_1,\dots, A_m\in\M_k(\CC)$ with
$\displaystyle\sum_{j=1}^mA_j^*A_j=1$, implies that there exist
unitaries $U_1,\dots, U_m\in\M_k(\CC)$ such that $X_j=U_j^*XU_j$ for all $j=1,\dots,m$.
\end{enumerate}
\end{definition}

\bigskip

\begin{theorem}\label{theorem: periodic shift} Assume that f $W\in\B(\ell^2(\mathbb N))$ is an
irreducible periodic distinct unilateral weighted shift with smallest period $p$. Then
$\cstare(\oss_W)=C(\TT)\otimes \M_p(\CC)$ and $\mathfrak S_{\oss_W}=\K\left(\ell^2(\mathbb N)\right)$.
\end{theorem}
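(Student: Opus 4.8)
The plan is to realise $\cstar(W)$ as an extension of the compact operators by a $p$-homogeneous C$^*$-algebra, to show that the compacts are swallowed by the \v Silov ideal, and then to locate all boundary representations inside the homogeneous quotient, where Theorem~\ref{matrix convex pure} applies. First I would determine $\cstar(W)$. Since every weight is nonzero, $W$ is irreducible, so $\cstar(W)$ is an irreducible C$^*$-algebra. Because the weights have period $p$, the product $w_nw_{n+1}\cdots w_{n+p-1}$ is the constant $c=\prod_{j=1}^p w_j$, whence $W^p=c\,S^p$ for the unweighted unilateral shift $S$; consequently $W^{*p}W^p-W^pW^{*p}=c^2P$, where $P$ is the projection onto $\mathrm{span}\{e_1,\dots,e_p\}$. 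This is a nonzero finite-rank operator in $\cstar(W)$, so $\K(\ell^2(\mathbb N))\subset\cstar(W)$. Grouping the basis into blocks of length $p$ identifies $\ell^2(\mathbb N)$ with $\ell^2(\mathbb N)\otimes\CC^p$ and writes $W=I\otimes A+S_+\otimes B$, where $A=\sum_{j=1}^{p-1}w_jE_{j+1,j}$, $B=w_pE_{1,p}$, and $S_+$ is the unweighted shift. Since $\cstar(S_+)/\K=C(\TT)$, passing to the Calkin algebra yields a canonical isomorphism $\cstar(W)/\K\cong\M_p(C(\TT))=C(\TT)\otimes\M_p(\CC)$ under which the image of $W$ is the symbol $\widehat W(z)=A+zB$, an irreducible weighted cyclic shift for each $z\in\TT$. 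Here the distinctness and minimal-period hypotheses guarantee that the quotient is the full $p$-homogeneous algebra.

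The next step is to show $\K\subseteq\mathfrak S_{\oss_W}$, for which it suffices to check that the identity representation of $\cstar(W)$ is not a boundary representation. Let $V$ be the isometry $Ve_n=e_{n+p}$. Periodicity gives $V^*WV=W$ and $V^*W^*V=W^*$, so the ucp map $X\mapsto V^*XV$ restricts to the identity on $\oss_W$; but it does not equal the identity on $\cstar(W)$, since it moves $WW^*$. Thus the identity fails the unique-extension property and is not a boundary representation. Every boundary representation of $\oss_W$ is therefore one of the finite-dimensional irreducible representations, all of which annihilate $\K$; hence $\K\subseteq\mathfrak S_{\oss_W}$. It follows that the quotient map $\pi\colon\cstar(W)\to\M_p(C(\TT))$ is a complete order isomorphism on $\oss_W$, so $\cstare(\oss_W)=\cstare(\pi(\oss_W))$, and the problem reduces to computing the boundary representations of $\pi(\oss_W)\subset\M_p(C(\TT))$.

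Because $\M_p(C(\TT))$ is $p$-subhomogeneous, Theorem~\ref{matrix convex pure} applies inside the quotient: the point evaluation $\pi_z\colon\M_p(C(\TT))\to\M_p(\CC)$ is a boundary representation for $\pi(\oss_W)$ if and only if $\pi_z|_{\pi(\oss_W)}$ is pure, equivalently $\widehat W(z)$ is a matrix extreme point of the matricial range $\mathbb W(W)$. Verifying this for every $z\in\TT$ is the main obstacle. The plan is to take a putative decomposition $\widehat W(z)=\sum_j A_j^*\Omega_j A_j$ with $\Omega_j\in W_{n_j}(W)$ realised as $\Omega_j=\phi_j(\widehat W)$ for ucp maps $\phi_j$ on $\M_p(C(\TT))$, and to exploit the homogeneous structure together with the fact that the symbols $\{\widehat W(w):w\in\TT\}$ are pairwise non-unitarily equivalent — their spectra are the $p$-th roots of $cz$ and hence rotate with $z$ — to force each $n_j=p$ and each $\Omega_j$ unitarily equivalent to $\widehat W(z)$; I expect the minimal-period and distinctness hypotheses to enter precisely here, excluding spurious symmetries of the cyclic shift. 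Granting matrix extremity for all $z$, every $\pi_z$ is a boundary representation, so $\bigcap_{z\in\TT}\ker\pi_z=0$ and $\cstare(\oss_W)=\cstare(\pi(\oss_W))=\M_p(C(\TT))=C(\TT)\otimes\M_p(\CC)$. Finally, since a proper quotient of $\M_p(C(\TT))$ by a nonzero ideal is never isomorphic to $\M_p(C(\TT))$, comparing $\cstar(W)/\mathfrak S_{\oss_W}=\cstare(\oss_W)=\M_p(C(\TT))=\cstar(W)/\K$ with $\K\subseteq\mathfrak S_{\oss_W}$ forces $\mathfrak S_{\oss_W}=\K(\ell^2(\mathbb N))$, and $\choquet\oss_W=\TT$.
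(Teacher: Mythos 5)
Your reduction steps are sound, and in places genuinely different from the paper's: the identity $W^p=cS^p$ producing a finite-rank self-commutator (hence $\K(\ell^2(\NN))\subset\cstar(W)$), and the isometry $Ve_n=e_{n+p}$ giving a ucp map $X\mapsto V^*XV$ that fixes $\oss_W$ pointwise but moves $WW^*$ --- showing the identity representation is not a boundary representation --- are clean alternatives to the paper's route. (The paper instead obtains the complete isometry onto the Calkin image by tensoring the epimorphism $\cstar(\oss_S)\to C(\TT)=\cstare(\oss_S)$ with $\mathrm{id}_{\M_p}$, thereby avoiding the appeal to Arveson's existence theorem that your argument requires; both are legitimate.) However, your proof has a genuine gap exactly where the real content of the theorem lies: you never prove that the symbols $\widehat W(z)$ are matrix extreme points of the matricial range, equivalently that each $\pi_z|_{\pi(\oss_W)}$ is pure and hence (via Theorem~\ref{matrix convex pure}) that $\pi_z$ has the unique extension property. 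You explicitly write ``Granting matrix extremity for all $z$'' and ``I expect the minimal-period and distinctness hypotheses to enter precisely here,'' so the crucial verification is deferred rather than carried out. The paper itself warns, just before the theorem, that the subhomogeneous criterion of Theorem~\ref{matrix convex pure} is ``an abstract characterisation which yields limited information in specific cases''; the entire second half of its proof is devoted to exactly the verification you skip.

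Moreover, the idea you sketch for that verification would not suffice. In a putative decomposition $\widehat W(z)=\sum_j A_j^*\Omega_jA_j$, the elements $\Omega_j$ range over $W_{n_j}(\widehat W)$, that is, over images of $\widehat W$ under \emph{arbitrary} ucp maps $C(\TT)\otimes\M_p(\CC)\to\M_{n_j}(\CC)$: compressions of symbols, C$^*$-convex combinations over many points of $\TT$, and so on --- not merely the symbols $\widehat W(w)$ themselves. The pairwise non-unitary-equivalence of the symbols (their spectra being the $p$-th roots of $cz$) gives no control whatsoever over these general elements. To gain that control, the paper needs real machinery: the Bunce--Salinas theorem identifying the level-$p$ set $\mathfrak W_p$ with the C$^*$-convex hull of $\{\Omega_\lambda:\lambda\in\TT\}$; Morenz's Krein--Milman theorem for C$^*$-convex sets to produce at least one C$^*$-extreme symbol $\Omega_{\lambda_0}$; the Tsai--Wu numerical-range facts (every $\Omega_\zeta$ has the same numerical radius $r$, and $W(\Omega_\zeta)\cap r\TT$ consists of exactly the $p$ points $\omega^k\zeta$) to force every irreducible summand in the Stinespring dilation of a competing extension of $\pi_{\lambda_0}|_{\oss}$ to be $\pi_{\lambda_0}$ again; Morenz's Corollary 1.8 to conclude that the extension equals $\pi_{\lambda_0}$; and finally the unitary equivalence $UWU^*=e^{i\theta}W$ to rotate this single C$^*$-extreme point around all of $\TT$. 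Machinery of comparable strength (or a genuinely new argument) must be supplied at the step you labelled ``the main obstacle'' before your outline becomes a proof.
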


\begin{proof} We will assume that $w_p\not\in\{w_1,\ldots,w_{p-1}\}$; one such weight exists by $W$
being distinct; we will assume that it is $w_p$ because it simplifies the writing a little, but
the same idea can be used with any other weight. By periodicity and the fact that
$\ell^2(\mathbb N)\cong\displaystyle\bigoplus_1^p\ell^2(\mathbb N)$, we may
express $W$ as $p\times p$ matrix of operators acting on $\ell^2(\mathbb N)$  \cite[first paragraph in the proof
of Theorem 2.2]{bunce--deddens1973}:
\[
W\,=\,\left[\begin{array}{ccccc}
0 &&&& w_pS \\
 w_11 & 0 &&& \\
 & w_21 & \ddots && \\
 &&\ddots &0 & \\
 &&& w_{p-1}1 & 0 \end{array} \right] \,,
 \]
where unspecified entries of the matrix above are zero and $S\in\B\left(\ell^2(\mathbb N)\right)$ denotes
the unilateral shift operator.
The operator system $\oss_W$ is an operator subsystem of $\oss_S\otimes\M_p(\CC)$.

We aim to show first that $\cstar(W)=\cstar(\oss_S\otimes\M_p(\CC))$.
Of course we already have the inclusion $\cstar(W)\subset\cstar(\oss_S\otimes\M_p(\CC))$,
and so we consider the converse by a method suggested by the
proof of \cite[Proposition V.3.1]{Davidson-book}.
Note that $\cstar(\oss_S\otimes\M_p(\CC))=\cstar(\oss_S)\otimes\M_p(\CC)$.
Let $\{E_{ij}\}_{i,j=1}^p\subset\M_p(\CC)$ be the standard matrix units for $\M_p(\CC)$, and let
$F_{ij}=1\otimes E_{ij}\in \cstar(S)\otimes\M_p(\CC)$.
Because $W$ is irreducible, $w_k>0$ for all $k$.
Note that $|W|=(W^*W)^{1/2}\in\cstar(W)$
is the diagonal operator matrix $|W|=\sum_{k=1}^p w_kF_{kk}$.
Now let
$f\in\CC\,[t]$ be any polynomial for which $f(w_1)=\cdots=f(w_{p-1})=0$ and $f(w_p)=1$ (here is where
we use that $W$ is distinct); then
$F_{pp}=f(|W|)\in \cstar(W)$.

Now for any $i,j\in\{1,\dots,p\}$,
\[
(W^*)^{p-i}F_{pp}W^{p-j}\,=\,\alpha_{ij}F_{ij}\,,
\]
where $\alpha_{ij}>0$ is a product of weights $w_\ell$. Thus, $\cstar(W)$ contains each of the matrix units $F_{ij}$. Moreover,
$S\otimes E_{11} = \frac{1}{w_p} F_{1p}WF_{p1}\in\cstar(W)$. By multiplying $S\otimes E_{11} $ on the left and right with appropriate
matrix units $F_{ij}$ we obtain $S\otimes E_{ij}\in\cstar(W)$ for every $i$ and $j$.
Hence, $\oss_S\otimes\M_p(\CC)\subset\cstar(W)$ and so
$\cstar(\oss_S\otimes\M_p(\CC))=\cstar(W)$.

Because $S$ is a proper isometry, Proposition \ref{isometry} states that $\cstare(\oss_S)=C(\TT)$.
Hence, there is an epimorphism
$\pi:\cstar(\oss_S)\rightarrow C(\TT)$ such that $\pi|_{ \oss_S}$ is a completely isometric linear map
that maps $S$ to the function $z\in C(\TT)$ given by $z(e^{i\theta})=e^{i\theta}$. Due to the fact that
$C(\TT)$ is abelian, it is easy to see that $\pi=0$ when restricted to the compact operators.
Let $\rho=\pi\otimes\mbox{id}_{\M_p}$, which is an epimorphism of $\cstar(\oss_S)\otimes\M_p(\CC)$
onto $C(\TT)\otimes\M_p(\CC)$ such that
$\rho|_{ \oss_S\otimes\M_p(\CC)}$ is a unital completely isometric map. Therefore,
$\iota:=\rho|^{\phantom{\oss_W}}_{\oss_W}$ is a completely isometric
embedding of $\oss_W$ into $C(\TT)\otimes\M_p(\CC)$:
\[
\oss_W\longrightarrow  \cstar(\oss_W)  \longrightarrow C(\TT)\otimes\M_p(\CC)\,.
\]
Under this embedding $\iota$, $W$ is mapped to the matrix
\[
\iota(W)\,=\, \left[\begin{array}{ccccc}
0 &&&& w_pz \\
 w_1 & 0 &&& \\
 & w_2 & \ddots && \\
 &&\ddots &0 & \\
 &&& w_{p-1} & 0 \end{array} \right] \,.
 \]
Because $\rho$ is onto, the $\cstar$-algebra $C(\TT)\otimes \M_p(\CC)$ is
generated by $\oss_{\iota(W)}$, the completely isomorphic copy of $\oss_W$.

Hence, we need no longer work with $W$ and $\cstar(W)$, but may instead study $\oss_{\iota(W)}$ and $\cstar(\iota(W))$. In this regard,
we show that the \v Silov boundary ideal of $\oss_{\iota(W)}$ is $\{0\}$, which implies that
\[
\cstare(\oss_W)=\cstare(\oss_{\iota(W)})=\cstar(\oss_{\iota(W)})=C(\TT)\otimes\M_p(\CC)\,.
\]
This is achieved by showing that
every irreducible representation of $C(\TT)\otimes\M_p(\CC)$ is a boundary representation for $\oss_{\iota(W)}$.

To this end, observe first that the irreducible representations of $C(\TT)\otimes\M_p(\CC)$ are determined by points $\lambda\in\TT$ and are
of the form
\[
\xymatrix{
**[r]\pi_\lambda: C(\TT)\otimes\M_p(\CC)\ \ \ \ \ar[rrr] & & & \ \ \ \ \ \ \M_p(\CC)}
\]
\[
\xymatrix{
**[r]\ \ \ \ \ \ \ \ \ \ \ \ \ \ [f_{kj}]_{k,j=1}^p \ \ \ \ \ \ \ \ \   \ar@{|->}[rrr]& & & \ \ \ \ \ [f_{kj}(\lambda)]_{k,j=1}^p
}
\]
where $f_{kj}\in C(\TT)$. For each $\lambda\in\TT$ let $\Omega_\lambda\in\M_p(\CC)$ denote the (irreducible) matrix $\Omega_\lambda=\pi_\lambda(\iota(W))$.
By \cite[Theorems 3.9, 3.10]{bunce--salinas1976}, the C$^*$-convex hull of the
set $\{\Omega_\lambda\,:\,\lambda\in\TT\}$ is precisely
the set $\mathfrak W_p$ of all matrices of the form $\Phi(\iota(W))$, where
$\Phi:C(\TT)\otimes\M_p(\CC)\rightarrow\M_p(\CC)$ is an arbitrary ucp map, i.e.
\[
\mathfrak W_p=\{\Phi(\iota(W)):\ \Phi:C(\TT)\otimes\M_p(\CC)\rightarrow\M_p(\CC) \text{ ucp }\}.
\] Because
every $\Omega_\lambda$ is irreducible, every structural element of $\mathfrak W_p$ is unitarily equivalent to some $\Omega_\lambda$, by Morenz's
Krein--Milman Theorem \cite[Theorem 4.5]{morenz1994}. Hence, for at least one $\lambda_0\in\TT$ the matrix
$\Omega_{\lambda_0}$ is a
C$^*$-extreme point of $\mathfrak W_p$.
We now show that for this particular $\lambda_0$ the irreducible representation $\pi_{\lambda_0}$
is a boundary representation for $\oss_{\iota(W)}$.

The BW-compact set $C_{\lambda_0}$ of all ucp maps $\psi:C(\TT)\otimes\M_p(\CC)\rightarrow\M_p(\CC)$
that extend $\pi_{\lambda_0}|_{\oss_{\iota(W)}}$ is convex; thus,
it is sufficient to show that if $\phi$ is an extreme point of $C_{\lambda_0}$, then $\phi=\pi_{\lambda_0}$.
Because C$^*$-extreme points of matrix sets are also extreme points, $\Omega_{\lambda_0}$
is an extreme point of $\mathfrak W_p$. Hence, by a standard convexity argument,
the extreme point $\phi$ of $C_{\lambda_0}$ is also an extreme point of the
set of all ucp maps $\vartheta:C(\TT)\otimes\M_p(\CC)\rightarrow\M_p(\CC)$. Now we write $\phi=V^*\pi V$ using
a minimal Stinespring decomposition, where $V$ is an isometry $\CC^p\to \H_\pi$
and $\pi:\C(\TT)\otimes M_p(\CC)\to B(\H_\pi)$
for some Hilbert space $\H_\pi$. By
\cite[Theorem 1.4.6]{arveson1969}, the subspace $V\CC^p\subset\H_\pi$ is
faithful for the commutant of
$\pi(C(\TT)\otimes M_p(\CC))$. Hence, $\pi(C(\TT)\otimes M_p(\CC))V\CC^p$ is dense in $\H_\pi$, and
so $\H_\pi$ is finite-dimensional.
Thus, we can write
$\pi=\displaystyle\bigoplus_{j=1}^m\pi_j$ as a decomposition into a finite
direct sum of irreducible (sub)representations $\pi_j$, where $\H_{\pi_j}\subset\H_\pi$
is a subspace. Then each $P_j=\pi_j(1)$ is a central
projection in $\pi(C(\TT)\otimes M_p(\CC))^\prime$, and
$\displaystyle\sum_{j=1}^mP_j=1$ in $\B(\H_\pi)$.

Because the spectrum of the C$^*$-algebra $C(\TT)\otimes M_p(\CC)$ is $\mathbb T$, for each $j=1,\dots,m$ there is a $\lambda_j\in\mathbb T$ such that $\pi_j=\pi_{\lambda_j}$.
Therefore, we can write, for $f\in C(\TT)\otimes M_p(\CC)$,
\[
\phi(f)=V^*\pi(f)V=V^*\left(\sum_{j=1}^m \pi_{\lambda_j}(f)P_j\right)V=\sum_{j=1}^m (P_jV)^*\pi_{\lambda_j}(f) (P_jV)
\]
Note that
\begin{equation}\label{equation: convex coefficients}
\sum_{j=1}^m(P_jV)^*(P_jV)=\sum_{j=1}^m V^*P_jV=V^*V=1.
\end{equation}
If $\xi\in\CC^p_{\phantom{p}}$ is a unit vector we define, for nonzero $P_jV\xi$,
$\hat\xi_j=\|P_jV\xi\|^{-1}P_jV\xi$; otherwise we let $\hat\xi_j=0$. Then
\begin{align*}
\langle \Omega_{\lambda_0}\xi,\xi\rangle\,&=\,\langle \phi(\iota(W))\xi,\xi\rangle\,
=\sum_{j=1}^m\langle \pi_{\lambda_j}(\iota(W))P_jV\xi,P_jV\xi\rangle \\
&=\,\sum_{j=1}^m \|P_jV\xi\|^2\langle\Omega_{\lambda_j}\hat\xi_j,\hat\xi_j\rangle\,.
\end{align*}

The equality in \eqref{equation: convex coefficients} implies that $\sum_{j=1}^m\|P_jV\xi\|^2=1$ (i.e. they are
convex coefficients), and so we obtain
\begin{equation}\label{incl}
W(\Omega_{\lambda_0})\,\subset\,{\rm Conv}\,\left(\bigcup_{j=1}^m W(\Omega_{\lambda_j})\right)\,.
\end{equation}
If $\zeta,\nu\in\TT$ are arbitrary, then
the moduli of the weights in the shift matrices $\Omega_{\zeta}$ and $\Omega_\nu$ coincide; thus,
$\Omega_{\zeta}$ and $\Omega_\nu$ have the same numerical radius \cite[Lemma 2(2)]{tsai--wu2011}. Hence, there
is a constant $r>0$ such that the numerical radius of $\Omega_\zeta$ is $r$ for every $\zeta\in\TT$.
Furthermore, for any $\zeta\in\TT$,
\[
W(\Omega_\zeta)\,\cap\,r\TT\,=\,\{\omega^k\zeta\,:\,k=1,\dots,p\}\,,
\]
where $\omega\in\CC$ is a primitive $p$-th root of unity \cite[Proposition 3]{tsai--wu2011}. Thus, there are exactly $p$ extreme points
of the numerical range of any $\Omega_\zeta$ on the circle $r\TT$. Hence, the only way in which the inclusion \eqref{incl} can hold
is if $\lambda_j=\lambda_0$ for every $j$. Consequently,
\[
\Omega_{\lambda_0}\,=\,\phi(\iota(W))\,=\,\sum_{j=1}^m (P_jV)^*\pi_{\lambda_0}(\iota(W)) P_jV\,
=\,\sum_{j=1}^m (P_jV)^*\Omega_{\lambda_0} P_jV\,.
\]
Now because $\Omega_{\lambda_0}$ is an irreducible C$^*$-extreme point of $\mathfrak W_p$,
the expression above for $\Omega_{\lambda_0}$
holds only if there are
unitaries $U_1,\dots, U_m\in\M_p(\CC)$ and convex coefficients $t_j\in(0,1)$ such that $P_jV=t_j^{1/2}U_j$ \cite[Corollary 1.8]{morenz1994}.
Thus,
\[
\Omega_{\lambda_0}\,=\,\sum_{j=1}^mt_j U_j^*\Omega_{\lambda_0} U_j\,.
\]
However, every matrix is an extreme point of the convex hull of its unitary orbit and
so $U_j^*\Omega_{\lambda_0} U_j=\Omega_{\lambda_0}$ for each $j$.
As $\Omega_{\lambda_0}$ is irreducible, each $U_j$ is the identity and so
\[
\phi\,=\,\sum_{j=1}^m (P_jV)^*\pi_{\lambda} P_jV\,=\,\sum_{j=1}^mt_j U_j^*\pi_{\lambda_0}  U_j\,
=\,\sum_{j=1}^mt_j\pi_{\lambda_0}\,=\,\pi_{\lambda_0}\,.
\]
This completes the proof that $\pi_{\lambda_0}$ is a boundary representation for at least one $\lambda_0\in\TT$.

Note that we have
\[
\mathfrak W_p=\{\tilde\Phi(W):\ \tilde\Phi:\cstar(W)\to M_p(\CC), \text{ucp, }\tilde\Phi(K)=0\ \forall
K\in\cstar(W)\cap\K(\H)\}.
\]
Indeed, for $\tilde\Phi$ as above, using that $\iota$ is a
complete isometry we can define the ucp map
$\Phi=\tilde\Phi\circ\iota^{-1}:\oss_{\iota(W)}\to M_p(\CC)$ and then extend by Arveson's Extension Theorem
to $\cstar(\iota(W))=C(\TT)\otimes M_p(\CC)$. By construction, $\Phi(\iota(W))=\tilde\Phi(W)$.
Conversely, if $\Phi:C(\TT)\otimes M_p(\CC)\to M_p(\CC)$ is ucp, we can define $\tilde\Phi=\Phi\,\circ\,\iota$.
As both $\Phi$ and $\iota$ are ucp, so is $\tilde\Phi$ and we can extend it using Arveson's Extension Theorem
to all of $\cstar(W)$.  For any $K\in\cstar(W)\cap\K(\H)$ we have $\iota(K)=0$, and so by
construction, $\tilde\Phi(K)=0$.

If $\theta\in\mathbb R$, then $e^{i\theta}W$ is unitarily
equivalent to $W$; indeed, if we note that $e^{ip\theta}S$ is an
isometry, then by the Wold decomposition there is a unitary $V$
such that $e^{ip\theta}S=VSV^*$ (one can write this unitary explicitly: it is the diagonal unitary
in $\B(\ell^2(\NN))$
with diagonal $(1,e^{ip\theta},e^{2ip\theta},\ldots)$). Let $U$ be the block-diagonal unitary
\[
U=\begin{bmatrix} V\\ & e^{i\theta}V\\ & & e^{2i\theta}V\\ &  & & \ddots \\ & & & & e^{(p-1)i\theta}V
\end{bmatrix}
\]
A straightforward computation then shows that $UW=e^{i\theta}WU$, and so $UWU^*=e^{i\theta}W$.
We conclude that $\mathfrak W_p$ is closed under multiplication by scalars of modulus $1$.

Now select an arbitrary $\lambda'\in\TT$. We aim to show that $\pi_{\lambda'}$ is a boundary representation.
To do so, by the method of proof above applied to $\pi_{\lambda_0}$
it is sufficient to show that $\Omega_{\lambda'}$ is an irreducible C$^*$-extreme point of
$\mathfrak W_p$. Because the weighted shift matrix $\Omega_{\lambda'}$ differs
from $\Omega_{\lambda_0}$ in the $(1,p)$-entry only, and because $|\lambda'|=|\lambda_0|$,
there are a unitary $U'\in\M_p(\CC)$ and a $\theta\in\mathbb R$ such that
$e^{i\theta}\Omega_{\lambda'}=(U')^*\Omega_{\lambda_0} U'$ \cite[Lemma 2(2)]{tsai--wu2011}.
As C$^*$-extreme points are closed under unitary similarity and because $\Omega_{\lambda_0}$
is C$^*$-extremal in $\mathfrak W_p$, we deduce that $e^{i\theta}\Omega_{\lambda'}$ is a C$^*$-extreme point of $\mathfrak W_p$. That is, $\Omega_{\lambda'}$
is a C$^*$-extreme point of $e^{-i\theta}\mathfrak W_p=\mathfrak W_p$.

Hence, the boundary representations for $\oss_W$ are precisely the irreducible representations of
$\cstar(W)$ of the form $\pi_\lambda\circ \pi$, for all $\lambda\in\TT$,
which is to say that $\cstare(\oss_W)=C(\TT)\otimes \M_p(\CC)$ and $\mathfrak S_{\oss_W}=\K\left(\ell^2(\mathbb N)\right)$.
 \end{proof}


\end{document}